\theoremstyle{plain}
\newtheorem{theorem}{Theorem}[section]
\newtheorem{lemma}[theorem]{Lemma}
\newtheorem{proposition}[theorem]{Proposition}
\newtheorem{corollary}[theorem]{Corollary}
\numberwithin{equation}{section}
\theoremstyle{definition}
\newtheorem{definition}[theorem]{Definition}
\newtheorem{remark}[theorem]{Remark}
\DeclareMathOperator{\Aut}{Aut}
\DeclareMathOperator{\Res}{Res}
\DeclareMathOperator{\Ext}{Ext}
\DeclareMathOperator{\Mod}{-Mod}
\DeclareMathOperator{\module}{-mod}
\DeclareMathOperator{\inj}{-inj}
\DeclareMathOperator{\Ind}{Ind}
\DeclareMathOperator{\Hom}{Hom}
\DeclareMathOperator{\End}{End}
\DeclareMathOperator{\mor}{Mor}
\newcommand{\C}{{\mathscr{F}}}
\newcommand{\Cop}{{\mathscr{F}^{op}}}
\newcommand{\Cm}{{\mathscr{F}^m}}
\newcommand{\CmG}{{\mathscr{F}^m_G}}
\newcommand{\CS}{{\mathscr{F}^S}}
\newcommand{\CSG}{{\mathscr{F}^S_G}}
\newcommand{\CT}{{\mathscr{F}^{\neg S}}}
\newcommand{\CTG}{{\mathscr{F}^{\neg S}_G}}
\newcommand{\bft}{{\mathbf{t}}}
\newcommand{\bfn}{{\mathbf{n}}}
\newcommand{\bff}{{\mathbf{f}}}
\newcommand{\bfo}{{\mathbf{1}}}
\newcommand{\bfs}{{\mathbf{s}}}
\newcommand{\Sinduced}{{$S$-induced}}
\newcommand{\Stf}{{$S$-torsion free}}
\newcommand{\Shd}{{$S$-homological degree}}
\newcommand{\DS}{{\mathbf{D}_S}}
\newcommand{\KS}{{\mathbf{K}_S}}
\newcommand{\FI}{{\mathrm{FI}}}
\newcommand{\FIM}{{\mathrm{FI}^m}}
\newcommand{\Fs}{{F_\bfs}}
\newcommand{\tzero}{{t_0^S}}
\newcommand{\tone}{{t_1^S}}
\newcommand{\ti}{{t_i^S}}
\newcommand{\Hzero}{{H_0^S}}
\newcommand{\Hone}{{H_1^S}}
\newcommand{\Htwo}{{H_2^S}}
\newcommand{\Hi}{{H_i^S}}
\newcommand{\IS}{{\mathfrak{I}_S}}
\newcommand{\kCmI}{{k\Cm/\IS}}
\newcommand{\ShiftS}{{\mathbf{\Sigma}_S}}
\newcommand{\obj}{{\mathrm{Obj}}}
\newcommand{\Rs}{{\mathscr{R}_\bfs}}
\newcommand{\U}{{\mathcal{U}}}
\title{A classification of injective $\FIM$-modules}
\author{Duo Zeng}
\address{LCSM (Ministry of Education), School of Mathematics and Statistics, Hunan Normal University, Changsha, Hunan 410081, China.}
\email{zengduo@hunnu.edu.cn}
\thanks{The author is partially supported by the National Natural Science Foundation of China (Grant No. 11771135) of his advisor Liping Li. He would like to thank Prof. Li for leading him into this area, and for the numerous discussions and suggestions.}
\begin{document}

\begin{abstract}
In this paper we generalize a shift theorem, which plays a key role in studying representations of $\FIM$, the product category of the category of finite sets and injections, and classify finitely generated injective $\FIM$-modules over a field of characteristic 0.
\end{abstract}

\maketitle

\textbf{2020 mathematics subject classification:} 18G05, 16D50

\textbf{Keywords:} FI$^m$-modules, shift theorem, injective modules.

\section{Introduction}
\subsection{Motivation}
The representation theory of infinite combinatorial categories has attracted much attention. It is mainly concerned with how an infinite category acts on a category of modules as they have close relations to (co)homological groups of topological spaces, geometric groups, and algebraic varieties. Among quite a few frequently concerned examples, the most important infinite combinatorial category is the category $\FI$ of finite sets and injections whose representation theoretic and homological properties are extensively studied;  see \cite{CEFN}.

Due to the importance of the category $\FI$, the structure of finitely generated injective $\FI$-module is of interest to many mathematicians. In \cite{SamSnowden}, Sam and Snowden firstly classified all injective $\FI$-modules over a field of characteristic 0, and proved that every finitely generated $\FI$-modules has finite injective dimension. In \cite{GanLi}, Gan and Li give another proof of this fact by introducing the coinduction functor for $\FI$-modules. These results give a deep homological explanation for the following crucial result established by Church, Ellenberg and Farb in \cite{CEF}: a sequence of representations of symmetric groups over a field of characteristic 0 encoded by an $\FI$-module is representation stable if and only if it is a finitely generated $\FI$-module.

One of the natural generalization of the category $\FI$ is the product category $\FIM$ whose representation theory has also been studied; see for instance \cite{Gadish, lr, LiYu}. However, the classification of the injective $\FIM$-modules is not covered in \cite{LiYu} and remains as an open problem at that time. In a recent work \cite{Zeng}, by extending the method used in \cite{GanLi}, the author showed the locally self-injective property of $\FIM$ over fields of characteristic $0$ and further found the external product of finitely generated injective $\FI$-modules being necessarily injective as $\FI^m$-module. In this paper, by generalizing certain concepts in \cite{LiYu} and utilizing an inductive method, the author successfully classifies all finitely generated injective $\FIM$-modules. Surprisingly, it turns out that the finitely generated indecomposable injective $\FIM$-modules are already found by the previous work \cite{Zeng}, i.e. they are exactly the external tensor product of $m$ many indecomposable injective $\FI$-modules.

\subsection{Main results}

Before describing the main results of this paper, let us introduce a few notations. Throughout this paper let $m$ be a positive integer and denote by $[m]$ the set $\{1,\ldots,m\}$. Let $\FI$ be the category of finite sets and injections. For brevity, we denote by $\C$ the full subcategory of $\FI$ consisting of objects $[n]$, $n \in \mathbb{N}$ and by $\Cm$ the product category of $m$ copies of $\C$. For any subset $S$ of $[m]$, we may form a product category $\CS$ of $\C$ indexed by $S$.

Let $k$ be a field of characteristic zero. For a locally small category $\mathcal{C}$, a \textit{representation} of $\mathcal{C}$ or a \textit{$\mathcal{C}$-module} over $k$ is a covariant functor from $\mathcal{C}$ to $k\Mod$, the category of vector spaces over $k$. We denote by $\mathcal{C} \Mod$ the category of all representations of $\mathcal{C}$ over $k$ and by $\mathcal{C} \module$ the category of finitely generated representations, which are quotients of direct sums of finitely many representable functors.

Let $\mathcal{C}$ and $\mathcal{D}$ be two locally small categories. There is a way to construct a $\mathcal{C} \times \mathcal{D}$-module from a pair of $\mathcal{C}$-modules and $\mathcal{D}$-module. Explicitly, given a $\mathcal{C}$-module $V_1$ and a $\mathcal{D}$-module $V_2$, we define their \textit{external tensor product} to be the $\mathcal{C} \times \mathcal{D}$-module, denoted by $V_1\boxtimes V_2$, such that
\[
    (V_1\boxtimes V_2)(c \times d) = V_1(c) \otimes_k V_2(d)
\]
where $c \in \obj(\mathcal{C})$ and $d \in \obj(\mathcal{D})$. For more details, see \cite[Definition 6.4]{Gadish}

Now we are ready to describe the main results of this paper. The first main result generalizes the shift theorem \cite[Proposition 4.10]{LiYu} by taking $S = [m]$, where $\Sigma_i$ is the $i$-th shift functor. For more details, see Section 2.

\begin{theorem} \label{theorem - applying_shift_makes_any_module_S-relative_projective}
Let $V$ be a finitely generated $\Cm$-module. Then there exists some positive integer $N$ such that $(\prod_{i \in S} \Sigma_i)^n V$ is $S$-semi-induced for $n \geqslant N$.
\end{theorem}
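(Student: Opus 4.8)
The plan is to prove the statement for all product categories simultaneously, by induction on $|S|$, peeling off one coordinate at each stage. The case $|S|=0$ is vacuous: then $\mathscr{F}^{\neg S}=\Cm$, the relevant induction functor is the identity, and every finitely generated module is $\emptyset$-semi-induced. The base case $|S|=1$, say $S=\{j\}$, is the real content: it asserts that for a finitely generated $\Cm$-module $V$ the module $\Sigma_j^nV$ is $\{j\}$-semi-induced for $n\gg 0$, which is nothing but the one-variable shift theorem for $\FI$-modules (see \cite[Proposition 4.10]{LiYu}) applied in the $j$-th coordinate, except that a finitely generated module over the remaining $m-1$ coordinates now plays the role of the ground field. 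Throughout I will use several formal facts, all immediate from the definitions and the exactness of $\otimes_k$ over a field: each $\Sigma_i$ is exact and the $\Sigma_i$ commute with one another; a shift $\Sigma_i$ with $i\notin T$ affects only the $\mathscr{F}^{\neg T}$-factor of an external product, hence commutes with $\Ind_{\mathscr{F}^{\neg T}}^{\Cm}$ and preserves $T$-semi-induced modules, whereas a shift $\Sigma_i$ with $i\in T$ preserves $T$-semi-induced modules because the shift of a semi-induced $\FI$-module is semi-induced; induction is transitive, $\Ind_{\mathscr{F}^{\neg S'}}^{\Cm}\circ\Ind_{\mathscr{F}^{\neg S}}^{\mathscr{F}^{\neg S'}}\cong\Ind_{\mathscr{F}^{\neg S}}^{\Cm}$ when $\neg S\subseteq\neg S'$; and an external tensor product of semi-induced modules is semi-induced. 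All the classes of modules involved are closed under extensions and direct summands, which rests on the noetherianity of finitely generated $\FIM$-modules over a field.

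Assume now $|S|\geq 2$ and fix $j\in S$; put $S'=S\setminus\{j\}$, so $\neg S'=\neg S\sqcup\{j\}$. Since the shift functors commute, $(\prod_{i\in S}\Sigma_i)^nV=\Sigma_j^n(\prod_{i\in S'}\Sigma_i)^nV$ for every $n$. By the inductive hypothesis applied to $S'$ there is $N_0$ such that $W:=(\prod_{i\in S'}\Sigma_i)^{N_0}V$ is finitely generated and $S'$-semi-induced, and $(\prod_{i\in S'}\Sigma_i)^nV=(\prod_{i\in S'}\Sigma_i)^{n-N_0}W$ for $n\geq N_0$. I claim there is $N_1$ with $\Sigma_j^nW$ being $S$-semi-induced for all $n\geq N_1$. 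Granting the claim, for $n\geq\max(N_0,N_1)$ we get
\[
    \Bigl(\prod_{i\in S}\Sigma_i\Bigr)^nV=\Sigma_j^n\Bigl(\prod_{i\in S'}\Sigma_i\Bigr)^{n-N_0}W=\Bigl(\prod_{i\in S'}\Sigma_i\Bigr)^{n-N_0}\Sigma_j^nW,
\]
and since $\Sigma_j^nW$ is $S$-semi-induced and each $\Sigma_i$ with $i\in S'\subseteq S$ preserves $S$-semi-induced modules, the right-hand side is $S$-semi-induced; so $N=\max(N_0,N_1)$ works.

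To prove the claim, choose a finite filtration of $W$ whose subquotients are $S'$-induced; each such subquotient is, up to the precise formulation of the definition, of the shape $Q\boxtimes N$ with $Q$ a finitely generated $\mathscr{F}^{\neg S'}$-module and $N$ an induced $\mathscr{F}^{S'}$-module. Because $\neg S'=\neg S\sqcup\{j\}$ with $|\{j\}|=1<|S|$, the base case of the induction, applied to the product category $\mathscr{F}^{\neg S'}$ and the coordinate $j$, yields an integer $N_Q$ such that for $n\geq N_Q$ the module $\Sigma_j^nQ$ has a finite filtration by modules $R\boxtimes L$, where $R$ is a finitely generated $\mathscr{F}^{\neg S}$-module and $L$ is a semi-induced $\mathscr{F}^{\{j\}}$-module. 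Applying $\Sigma_j^n$ to the filtration of $W$ and using that $\Sigma_j$ is exact and acts only on the $\mathscr{F}^{\neg S'}$-factor of $Q\boxtimes N$, the module $\Sigma_j^nW$ acquires a finite filtration whose subquotients are themselves filtered by modules $(R\boxtimes L)\boxtimes N\cong R\boxtimes(L\boxtimes N)$. Now $L\boxtimes N$ is a semi-induced $\mathscr{F}^{\{j\}\sqcup S'}=\mathscr{F}^{S}$-module, so $R\boxtimes(L\boxtimes N)$ is $S$-semi-induced; since $S$-semi-induced modules are closed under extensions and $\Sigma_j$ is exact, $\Sigma_j^nW$ is $S$-semi-induced for all $n\geq N_1:=\max_Q N_Q$ (a finite maximum over the subquotients). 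This proves the claim and completes the inductive step.

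The genuine obstacle is the base case $|S|=1$: one must check that the proof of the one-variable shift theorem for $\FI$-modules survives when the ground field is replaced by the category $k\mathscr{F}^{\neg S}$ of the remaining coordinates, so that every ``vector space'' occurring in the argument becomes an arbitrary finitely generated $\mathscr{F}^{\neg S}$-module. This means isolating the right notion of finite generation relative to $k\mathscr{F}^{\neg S}$, checking that it is inherited by sub- and quotient modules (noetherianity of $\FIM$-modules), and confirming that the inductive control of generating degrees, together with the short exact sequences linking a module with its shift and its derivative, does not quietly rely on semisimplicity of the base beyond what remains available. By contrast, once this relative shift theorem is in hand, the remaining ingredients — commutation of shifts with external products and induction, transitivity of induction, and closure of the relevant classes under extensions — are routine, and the passage from $|S|$ to $|S|+1$ is pure bookkeeping.
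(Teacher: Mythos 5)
There is a genuine gap: your entire argument reduces the theorem to the base case $|S|=1$, namely the assertion that for a single coordinate $j$ the shifts $\Sigma_j^n V$ eventually become $\{j\}$-semi-induced, where the ``coefficients'' are finitely generated modules over the remaining coordinates $\mathscr{F}^{\neg\{j\}}$ --- and you never prove this. You cannot simply cite \cite[Proposition 4.10]{LiYu}: that result treats $S=[m]$, i.e.\ genuine $k$-coefficients, whereas your base case is a \emph{relative} shift theorem over a non-semisimple base category, which is precisely the new content of the theorem you are asked to prove. You acknowledge this yourself in your final paragraph (``one must check that the proof \ldots survives when the ground field is replaced by $k\mathscr{F}^{\neg S}$''), but listing what needs checking is not the same as checking it. The paper supplies exactly this missing content: it sets up the $S$-torsion theory and $S$-homology groups so that Lemma \ref{lemma_derivative_functor_decreases_generating_degree} ($\tzero(\DS V)=\tzero(V)-1$) and Lemma \ref{lemma - derivative_functor_and_S-relative_projective} (an \Stf\ module with $S$-semi-induced derivative is itself $S$-semi-induced) hold, and then runs an induction on the generating degree $\tzero(V)$ rather than on $|S|$, first killing the torsion by Lemma \ref{lemma - existence_of_a_sufficient_large_n_to_ensure_zero_kernel}. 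None of that machinery appears in your proposal, so the hard step is simply deferred, not done.

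A secondary issue: in the inductive step you assert that an $S'$-induced module ``is, up to the precise formulation of the definition, of the shape $Q\boxtimes N$.'' By Definition \ref{definition - the exact tensor functor on R-modules} an $S'$-induced module is $F_{\bfs}(W)$ for an $\mathscr{R}_{\bfs}$-module $W$, which is \emph{not} in general an external product; you need to decompose $W$ into summands $U_i\boxtimes W_i$ using the semisimplicity of $k\Aut(\bfs)$ in characteristic $0$ (compare Lemma \ref{lemma - extension_of_scalar_on_certain_yeilds_neat_external_tensor_product}, which handles only the already-split case). This is fixable, but it must be said; as written the reduction of subquotients to external products is unjustified. The overall architecture (induction on $|S|$, commuting shifts past the unaffected coordinates, closure of semi-induced modules under extensions) is a legitimate alternative skeleton to the paper's degree induction, but without a proof of the relative one-coordinate shift theorem the skeleton carries no weight.
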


The classification of finitely generated injective $\C$-module was first accomplished by Sam and Snowden in \cite{SamSnowden}. Later, Gan and Li gave a new and independent proof for this result in \cite{GanLi}, utilizing properties of the coinduction functor which is right adjoint to the shift functor. Extending Gan and Li's method, we showed in \cite{Zeng} that finitely generated projective $\Cm$-module is injective, and furthermore, external tensor products of finitely generated injective $\C$-modules are injective $\Cm$-modules. The second main theorem strengthens this result.
\begin{theorem} \label{theorem - classification_of_f.g._injective_FIm-module}
    Any finitely generated indecomposable injective $\Cm$-module is isomorphic to $I_1 \boxtimes \ldots \boxtimes I_m$ where each $I_i$ is a finitely generated indecomposable injective $\C$-module for $i = 1, \ldots, m$.
\end{theorem}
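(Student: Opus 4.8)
The plan is to argue by induction on $m$, using Theorem~\ref{theorem - applying_shift_makes_any_module_S-relative_projective} (with $S = [m]$) to reduce an arbitrary finitely generated indecomposable injective $\Cm$-module to a controllable ``bottom piece'' and then identify that piece as an external tensor product. The base case $m = 1$ is precisely the Sam–Snowden classification of finitely generated indecomposable injective $\C$-modules. For the inductive step, write $\Cm = \C \times \mathscr{F}^{m-1}$ and let $I$ be a finitely generated indecomposable injective $\Cm$-module.

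First I would record the behaviour of injectives under the shift functor: since $\Sigma_i$ has the coinduction functor as a right adjoint (the construction from \cite{Zeng}), and coinduction is exact on the relevant subcategories in characteristic $0$, the functor $\Sigma_i$ sends injectives to injectives, and likewise $\prod_{i \in [m]} \Sigma_i$ does. By Theorem~\ref{theorem - applying_shift_makes_any_module_S-relative_projective} there is an $N$ with $(\prod_{i} \Sigma_i)^N I$ semi-induced; but it is also injective, and a semi-induced injective module should be (a summand of) an honest induced module, whose structure as an external tensor product of induced $\C$-modules is transparent. So $(\prod_i \Sigma_i)^N I \cong I_1 \boxtimes \cdots \boxtimes I_m$ for suitable finitely generated injective $\C$-modules $I_i$; since $I$ is indecomposable and the shift functors preserve (non)decomposability in the appropriate sense, one may take each $I_i$ indecomposable. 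The remaining task is to descend this identification back down the tower of shifts: if $\Sigma_1 W \cong A \boxtimes B$ with $W$ indecomposable injective over $\Cm$, show $W$ itself is an external tensor product. Here I would use that $W$ embeds into its ``double'' via the unit/counit of the (shift, coinduction) adjunction, decompose the coinduction of $A \boxtimes B$ explicitly — coinduction commutes with $\boxtimes$ in the first variable — and extract $W$ as a summand, which forces the external tensor product form by indecomposability together with the classification of indecomposable injective $\C$-modules (each is indecomposable precisely because it is the coinduction/``shift completion'' of a single simple, and such objects have no nontrivial idempotents other than those visibly coming from one tensor factor).

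The main obstacle, I expect, is the descent step: controlling what happens to an injective $\Cm$-module under the \emph{inverse} of a shift, i.e. recovering $W$ from $\Sigma_1 W$. Shift functors are not invertible, so this requires genuinely using the adjunction and the finite-length/finiteness properties available in characteristic $0$ — concretely, realizing $W$ as the intersection (or equalizer) of two embeddings into an induced module built from $\Sigma_1 W$, and then checking that intersecting two external-tensor-product subobjects again yields an external tensor product. A secondary subtlety is ensuring the indecomposable injective $\C$-factors are genuinely unique, which I would settle via a Krull–Schmidt argument: finitely generated injective $\Cm$-modules have finite length as objects of a suitable category, or at least admit unique decompositions into indecomposables, and the external tensor product of indecomposables over $\C$ is indecomposable over $\Cm$ (this last fact, proved in \cite{Zeng}, is what makes the target list in the theorem a genuine classification rather than merely a spanning family).
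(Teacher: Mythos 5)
Your proposal has a genuine gap at the step you yourself flag as ``the main obstacle'': recovering $I$ from $(\prod_i \Sigma_i)^N I$. This descent is not merely delicate --- it is impossible in general, because the shift functors annihilate exactly the modules you most need to classify. Concretely, take $m=2$ and $I = P \boxtimes T$ with $P$ an indecomposable projective $\C$-module and $T$ a finite-dimensional indecomposable injective $\C$-module (case (3) of Lemma \ref{lemma - externel_tensor_of_indinjs_is_indinj_having_local_end_ring}); then $(\Sigma_1\Sigma_2)^N I \cong \Sigma^N P \boxtimes \Sigma^N T = 0$ once $N$ exceeds the top degree of $T$. The shifted module is vacuously semi-induced and carries no information about $I$, so no equalizer or intersection construction built from the shifted module can recover $I$. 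The same problem already defeats the strategy for purely finite-dimensional (torsion) injectives: your plan can only see the $[m]$-torsion-free part of $I$.

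The paper avoids descent entirely. It uses Theorem \ref{theorem - applying_shift_makes_any_module_S-relative_projective} only through Corollary \ref{proposition - S-torsion_free_module_can_be_embedded_into_S-relative_projective_module}: an \Stf\ module \emph{embeds} into $(\prod_{i\in S}\Sigma_i)^N V$, which is $S$-semi-induced. The real induction is on $m$ via the filtration $V^{[m]}_{tor} \subseteq \cdots \subseteq V^{[1]}_{tor} \subseteq V$ in Theorem \ref{theorem - finitely_generated_module_is_finitely_cogenerated_by_the_class_U^m}: the bottom term is finite-dimensional and embeds into a finite-dimensional injective, while each subquotient is $\{i\}$-torsion-free and embeds into an $\{i\}$-semi-induced module, hence reduces to an $\{i\}$-induced module $\Fs(W)$; the inductive hypothesis is then applied to $W$ over the smaller category $\Rs = \Aut(\bfs)\times \CTG$, which is precisely why the whole paper is developed over $\CmG$ rather than $\Cm$. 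The upshot is that every finitely generated module, in particular every indecomposable injective $I$, embeds into a finite direct sum of injective external tensor products; injectivity of $I$ splits this embedding, and Azumaya's theorem, via the local endomorphism rings established in Lemma \ref{lemma - externel_tensor_of_indinjs_is_indinj_having_local_end_ring}, identifies $I$ with one summand. Your closing Krull--Schmidt remark is the correct final step, but you must arrive at a direct-summand statement by cogeneration, not by inverting shifts.
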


Since we already know from \cite{SamSnowden} that an indecomposable injective $\C$-module is either an indecomposable projective $\C$-module or a finite dimensional indecomposable injective $\C$-module, which can be explicitly constructed, the above theorem actually gives a complete classification of finitely generated injective $\Cm$-modules.

\section{preliminaries}

In this section we give necessary notations, definitions, and some elementary results used throughout this paper. Since some results are generalizations of corresponded results described in \cite{LiYu} and can be established with the essentially same ideas or arguments, occasionally we omit detailed proofs and suggest the reader to see \cite{LiYu} for details.

\subsection{Some notations}

Recall that objects in $\Cm$ are of the form $\bfn = ([n_1], \, \ldots, \, [n_m])$. We denote by $\bfn + \bfn'$ the object in $\Cm$ whose $i$-th component is $[n_i+n'_i]$, and by $\bfo_i$ the object whose $i$-th component is the singleton set $[1]$ and all other components are empty sets. The \textit{degree} of an object $\bfn$, denoted by $\deg(\bfn)$, is defined to be the integer $\sum_i n_i$. For a morphism $\alpha: \bfn \to \bfn'$, we define $\deg(\alpha)$ to be the integer $\deg(\bfn') - \deg(\bfn)$. We also mention that there is a partial order $\preccurlyeq$ defined on $\obj(\Cm)$ by specifying $\bfn \preccurlyeq \bfn'$ if $\Cm(\bfn, \bfn') \neq \emptyset$.

Let $V$ be an $\Cm$-module. We denote the value of $V$ on an object $\bfn$ by $V(\bfn)$. For a morphism $\alpha: \bfn \to \bfn'$ in $\Cm$ and an element $v \in V(\bfn)$, we denote by $\alpha \cdot v$ the element $V(\alpha)(v) \in V(\bfn')$. Let $\Cm \Mod$ be the category of all $\Cm$-modules. It is well known that this category is abelian and has enough projective objects. In particular, for an object $\bfn$ in $\Cm$, the $k$-linearization of the representable functor $\Cm(\bfn, -)$ is a projective $\Cm$-module. We denote it by $M(\bfn)$, and we say that an $\Cm$-module is a \textit{free module} if it is isomorphic to a direct sum of $k$-linearizations of representable functors.

A key technical tool for studying representations of $\C$ is an endofunctor on $\C \Mod$, which is introduced in \cite{CEF} and called \textit{shift functor}. For convenience of the readers, we present here its definition and generalization. There is a self-embedding functor $\iota$ on $\C$ such that $\iota([n]) = [n+1]$, and for a morphism $f:[n] \to [t]$ in $\C$, $\iota(f)$ is a morphism from $[n+1]$ to $[t+1]$ with
\[
\iota(f)(x) =
\begin{cases}
1, & x = 1 \\
f(x-1)+1, & x \neq 1
\end{cases}
\]
for element $x \in [n+1]$. The \textit{shift functor} $\Sigma$ is defined to be the endofunctor on $\C\Mod$ sending an $\C$-module $V$ to the $\C$-module $V \circ \iota$. Furthermore, there is a natural transformation between the identity functor and the shift functor, so we get a natural homomorphism $V \to \Sigma V$, and hence obtain the kernel functor $K$ and the cokernel functor $D$, which is called the \textit{derivative functor} in \cite{CEF}.

The self-embedding functor and associated shift functor on $\C$-modules induce $m$ distinct self-embedding functors and shift functors on $\Cm$-modules. Explicitly, for  $i \in [m]$, the product category $\Cm$ can be viewed as product $\C^{ [m] \setminus \{i\} } \times \C$. The $i$\emph{-th self-embedding} functor is defined to be the endofunctor $\iota_i := \mathrm{Id} \times \iota$ on $\Cm$ where $\mathrm{Id}$ is the identity functor on the category $\C^{ [m] \setminus \{i\} }$. The $i$\emph{-th shift functor} $\Sigma_i$ is defined to be the endofunctor on $\Cm\Mod$ sending an $\Cm$-module $V$ to the $\Cm$-module $V \circ \iota_i$. There are also the $i$-th \textit{kernel functor} $K_i$ and the $i$-th \textit{derivative functor} $D_i$ defined on the category $\Cm \Mod$; for their definitions and elementary properties, see \cite[Section 2.2]{LiYu}. Remark that the kernel functor commutes with the shift functor, i.e. $K_i\Sigma_j \cong \Sigma_jK_i$ for all $i,j \in [m]$.

A main goal of this paper is to extend quite a few results in \cite{LiYu} from the full set $[m]$ to an arbitrary nonempty subset $S$ of $[m]$. For this purpose, we need to introduce a few constructions. Fix a subset $S$ of $[m]$, and denote $\neg S$ the complement subset $[m] \setminus S$. Then $\Cm$ is the product category of $\CS$ and $\CT$. Accordingly, an object $\bfn = ([n_1], \, \ldots, \, [n_m]) \in \obj(\Cm)$ can be written as a product $\bfs \times \bft$ for some object $\bfs \in \obj(\CS)$ and object $\bft \in \obj(\CT)$. We define the \textit{$S$-degree} of $\bfn$ to be the degree of $\bfs$ in the category $\CS$ and denote it by $\deg_S(\bfn)$. For brevity, we write $\deg_i$ for $\deg_{\{i\}}$ where $i \in [m]$. We denote by $\ShiftS$ the endofunctor on $\Cm \Mod$ which is the direct sum of $\Sigma_i$ for all $i \in S$, i.e. $\ShiftS = \bigoplus_{i\in S} \Sigma_i$. The endofunctors $\KS$ and $\DS$ on the category $\Cm \Mod$ are defined similarly.

In the proofs of main results we have to deal with the following categories which are generalizations of $\Cm$. Let $G$ be a finite group, and view it as a category with a single object. We define $\CmG$ to be the product category of $\Cm$ and $G$. Explicitly, objects of $\CmG$ coincide with objects of $\Cm$, and morphisms of $\CmG$ are ordered pairs $(\alpha,g)$ where $\alpha \in \mor(\Cm)$ and $g \in G$. Clearly, when $G$ is the trivial group, then $\CmG$ is precisely $\Cm$. Furthermore, we remark that many results from \cite{LiYu} are still valid for $\CmG$, so we will restate some of them in this paper without providing detailed proofs. In particular, functors $\Sigma_i$, $K_i$, and $D_i$ can be extended to the category $\CmG \Mod$ in a natural way and we keep the same notations. It is worthy to remark that the category $\CmG$ is also locally Noetherian over $k$; that is, submodules of finitely generated $\CmG$-modules are still finitely generated.

\subsection{$S$-Torsion theory}

In this subsection we introduce a torsion theory with respect to the nonempty subset $S \subseteq [m]$, and give a few elementary results.

\begin{definition} \label{definition - S-torsion}
Let $V$ be an $\CmG$-module and $\bfn$ an object in $\CmG$. An element $v \in V_{\bfn}$ is called \textit{$S$-torsion} if there exists a morphism $\alpha$ in $\CmG$ such that $\deg_S(\alpha) > 0$, $\deg_{\neg S}(\alpha) = 0$, and $\alpha \cdot v = 0$.
\end{definition}

Suppose that $\alpha: \bfn \to \bfn'$ is a morphism in the above definition. Then $\deg_S (\alpha) > 0$ means that there exists a certain $i \in S$ such that $n'_i > n$, and $\deg_{\neg S}(\alpha) = 0$ means that $n'_j = n_j$ for all $j \in \neg S$. Loosely speaking, this means that $\alpha$ is a morphism along the $S$-direction.

Let $V$ be a non-zero $\CmG$-module. It is clear that all $S$-torsion elements in $V$ form a submodule which is called the \textit{$S$-torsion part} of $V$ and denoted by $V_T^S$. The \textit{$S$-torsion free part} of $V$ is defined to be the quotient module $V/V_T^S$ and denoted by $V_F^S$. Then we have a short exact sequence $0 \to V_T^S \to V \to V_F^S \to 0$. We say that an $\CmG$-module is \textit{$S$-torsion} (resp., \textit{$S$-torsion free}) if and only if its torsion free part (resp., torsion part) is zero. We remark that when taking $S = [m]$, these definitions coincide with the ones introduced in the paper \cite{LiYu}.

There is another type of torsion submodule which is, in some sense, ``stronger" than the above one. Let $S$ be a subset of $[m]$, we denote by $V^S_{tor}$ the submodule
\[
V^S_{tor} := \bigcap_{i \in S} V_T^{ \{i\} }
\]
of $V$. More transparently, one has
\[
V^S_{tor} = \bigoplus_{\bfn \in \obj(\CmG)} \{ v \in V(\bfn) \mid \forall i \in S, \exists \alpha_i \in \mor(\CmG), \; \deg_i(\alpha_i) > 0, \deg_{[m] \setminus \{i\}}(\alpha_i) = 0, \text{ and }\alpha_i \cdot v = 0 \}.
\]

\begin{remark} \normalfont
Let $v$ be an element in $V(\bfn)$ for a certain object $\bfn$ in $\CmG$. Then $v$ is contained in $V^S_T$ if it eventually vanishes along the $i$-th direction for a certain $i \in S$, and $v$ is contained in $V^S_{tor}$ if it eventually vanishes along the $i$-th direction for all $i \in S$. Keeping in mind this intuition, it is easy to see that the quotient module $V^{[i-1]}_{tor} / V^{[i]}_{tor}$ is $\{i\}$-torsion free. Furthermore, if $V$ is finitely generated, then the submodule $V^{[m]}_{tor}$ is finite dimensional, a consequence of the locally Noetherian property of $\CmG$ over $k$.
\end{remark}

In the rest of this subsection we state a few elementary results on $S$-torsion theory, which have been established in \cite{LiYu} for the special case that $S = [m]$.

\begin{lemma} \label{lemma_properties_of_C_module}
Let $V$ be an $\CmG$-module. Then:
    \begin{enumerate}
    \item $V$ is \Stf~if and only if $\KS V = 0$, or equivalently, $K_iV = 0$ for all $i \in S$. \label{statement - relation_between_S-torsion-free_and_kernel_functor}
    \item If V is \Stf, then so is $\Sigma_i V$ for all $i \in S$. \label{statement - shift_functor_preserves_S-torsion-free}
    \item In a short exact sequence $0 \to U \to V \to W \to 0$, if both $U$ and $W$ are \Stf, so is $V$. \label{statement - S-torsion-free_is_closed_under_extension}
    \item \label{statement_short_sequence_is_exact} In a short exact sequence $0 \to U \to V \to W \to 0$, if $W$ is \Stf, then the sequence $0 \to \DS U \to \DS V \to \DS W \to 0$ is exact as well. In particular, $0 \to D_i U \to D_i V \to D_i W \to 0$ is exact for all $i \in S$.
    \end{enumerate}
\end{lemma}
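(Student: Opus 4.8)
The plan is to reduce everything to assertion~(1): once it is proved, (2)--(4) follow by formal manipulations with the commutation isomorphism $K_i\Sigma_j\cong\Sigma_jK_i$ and the snake lemma. So I would start with~(1). The equivalence $\KS V=0\iff K_iV=0$ for all $i\in S$ is immediate from $\KS=\bigoplus_{i\in S}K_i$. The implication ``$V$ \Stf{} $\Rightarrow K_iV=0$'' for $i\in S$ is also easy: a nonzero element of $K_iV$ is by definition killed by the canonical inclusion $\bfn\to\bfn+\bfo_i$, which has positive $i$-degree and zero $j$-degree for $j\neq i$, hence is an $S$-torsion element, contradicting that $V$ is \Stf{}. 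The substance lies in the converse.

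To prove ``$K_iV=0$ for all $i\in S\Rightarrow V$ \Stf{}'', I would argue via iterated shifts. If $K_iV=0$ then the natural transformation $V\to\Sigma_iV$ is a monomorphism; and since $K_i\Sigma_jV\cong\Sigma_iK_j V$ wait---more precisely $K_j\Sigma_iV\cong\Sigma_iK_jV=0$ for all $i,j\in S$, monomorphy persists after applying any finite composition $\Sigma_w$ of shift functors $\Sigma_j$ with $j\in S$. Hence for each such $w$ the composite $V\to\Sigma_wV$ is a monomorphism; equivalently, for $\bfn'=\bfn+\sum_{j\in S}c_j\bfo_j$ the map $V(\bfn)\to V(\bfn')$ induced by the corresponding iterated ``standard'' inclusion $\lambda\colon\bfn\to\bfn'$ is injective. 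Now let $v\in V(\bfn)$ be $S$-torsion, so $(\alpha,g)\cdot v=0$ for some morphism $(\alpha,g)\colon\bfn\to\bfn'$ with $\deg_S>0$ and $\deg_{\neg S}=0$; choosing $w$ so that the target matches, I can write $(\alpha,g)=(\theta,g)\circ(\lambda,e)$ with $(\theta,g)$ an automorphism of $\bfn'$ in $\CmG$, using the standard fact that any injection between fixed finite sets is a permutation of the target composed with a fixed inclusion, together with invertibility of $g$. Then $(\lambda,e)\cdot v=(\theta,g)^{-1}\cdot\big((\alpha,g)\cdot v\big)=0$, and injectivity of $V(\lambda)$ forces $v=0$. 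Hence $V$ is \Stf{}.

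Granting~(1), the remaining parts are routine. For~(2): if $V$ is \Stf{} then $K_jV=0$ for all $j\in S$, so $K_j\Sigma_iV\cong\Sigma_iK_jV=0$ for all $j\in S$, whence $\Sigma_iV$ is \Stf{} by~(1). For~(3): given $0\to U\to V\to W\to 0$ with $U,W$ \Stf{}, a nonzero $S$-torsion element of $V$ maps to an $S$-torsion element of $W$, which must be zero, so it lifts to a nonzero element of $U$ killed by the same morphism, contradicting that $U$ is \Stf{}; alternatively, apply the left-exact functor $K_i$ ($i\in S$) to the sequence to get $0\to K_iU\to K_iV\to K_iW$ with both ends zero, forcing $K_iV=0$. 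For~(4): since $\DS=\bigoplus_{i\in S}D_i$ and direct sums are exact, it suffices to fix $i\in S$; applying the snake lemma to the map from $0\to U\to V\to W\to 0$ to its $\Sigma_i$-shift (exact because $\Sigma_i$ is exact), with vertical maps the natural transformations, yields the exact sequence
\[
0\to K_iU\to K_iV\to K_iW\xrightarrow{\ \partial\ }D_iU\to D_iV\to D_iW\to 0.
\]
As $W$ is \Stf{} and $i\in S$, assertion~(1) gives $K_iW=0$, so $\partial=0$ and $0\to D_iU\to D_iV\to D_iW\to 0$ is exact.

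I expect the only genuine obstacle to be the converse half of~(1); it is modeled on the case $S=[m]$ treated in \cite{LiYu}, the sole new ingredient being the bookkeeping of the $G$-component, which is harmless since elements of $G$ act by isomorphisms. Everything else is formal manipulation of functorial exact sequences.
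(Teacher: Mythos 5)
Your proposal is correct and follows essentially the same route as the paper: part (1) carries all the substance, reduced in both cases to the fact that being killed by a standard inclusion is exactly membership in some $K_i$, and (2)--(4) are then the same formal consequences via the commutation $K_i\Sigma_j\cong\Sigma_j K_i$ and the snake lemma applied to the map from the sequence to its shift. The only cosmetic difference is in the converse half of (1), where the paper inducts on $\deg_S$ of the annihilating morphism to reduce to degree one, whereas you compose the shift maps all the way up and factor the morphism as an automorphism following a standard inclusion; both are valid.
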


\begin{proof}
    (\ref{statement - relation_between_S-torsion-free_and_kernel_functor}): If $\KS V \neq 0$, then there exists a certain $i \in S$ such that $K_i V \neq 0$. By the definition of $K_i$ and Definition \ref{definition - S-torsion}, the $\CmG$-module $V$ contains nonzero $S$-torsion elements, and hence is not \Stf. Conversely, if $V$ is not \Stf, we can find a morphism $\bff \in \CmG(\bfn, \bft)$ satisfying the condition in Definition \ref{definition - S-torsion}, and a nonzero element $v \in V(\bfn)$ such that $\bff \cdot v  = 0$. By a simple induction on the degree of morphisms one can assume that $\deg_S(\bff) = 1$; that is, there is a certain $i \in S$ such that $\deg_i(\bff) = 1$. Clearly, $v \in K_i V$, so $\KS V \neq 0$.

    (\ref{statement - shift_functor_preserves_S-torsion-free}) Assume that $\Sigma_iV$ is not $S$-torsion free; that is, there is some nonzero $v \in \Sigma_iV (\bfn) = V(\bfn + \bfo_i)$ for a certain object $\bfn$ such that $v$ is sent to $0$ by some morphism $\alpha$ satisfying the condition in Definition \ref{definition - S-torsion}. By the definition of the $i$-th shift functor $\Sigma_i$, $v$ is sent to $0$ by the morphism $\iota_i(\alpha)$, which also satisfies the condition specified in Definition \ref{definition - S-torsion}. Thus $V$ is not $S$-torsion free, which is a contradiction.

    (\ref{statement - S-torsion-free_is_closed_under_extension}): Applying the exact functor $\ShiftS$ one gets a commutative diagram where all vertical rows represent natural maps:
    \begin{equation*}
    \xymatrix{
    0 \ar[r] & U \ar[r] \ar[d] & V \ar[r] \ar[d] & W \ar[r] \ar[d] & 0 \\
    0 \ar[r] & \ShiftS U \ar[r] & \ShiftS V \ar[r] & \ShiftS W \ar[r] & 0.
    }
    \end{equation*}
    According to statement (\ref{statement - relation_between_S-torsion-free_and_kernel_functor}), the maps $U \to \ShiftS U$ and $W \to \ShiftS W$ are injective, so is the map $V \to \ShiftS V$ by the snake Lemma. Therefore, by the first statement, $V$ is $S$-torsion free as well.

    (\ref{statement_short_sequence_is_exact}): Follows from the above commutative diagram and the snake Lemma.
\end{proof}

We present some useful properties of the functors $\Sigma_i$ and $D_i$.
\begin{lemma}[{\cite[Lemma 2.3]{LiYu}}] \label{lemma - properties_of_shift_and_derivative_functors}
    For $i,j \in [m]$, one has:
    \begin{enumerate}
        \item $\Sigma_i M(\bfn) \cong M(\bfn) \oplus M(\bfn - \bfo_i)^{\oplus n_i}$.
        \item $D_iM(\bfn) \cong M(\bfn - \bfo_i)^{\oplus n_i}$.
        \item $\Sigma_i \circ \Sigma_j = \Sigma_j \circ \Sigma_i$.
        \item $\Sigma_i \circ D_j = D_j \circ \Sigma_i$.
    \end{enumerate}
\end{lemma}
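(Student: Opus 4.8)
This is \cite[Lemma 2.3]{LiYu}; were I to reprove it, the plan would be as follows, and the substantive content is entirely in part (1).

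For (1), the plan is to unwind the definitions. By construction $\Sigma_i M(\bfn)(\bft) = M(\bfn)(\iota_i \bft) = k\,\Cm(\bfn, \bft + \bfo_i)$, where a morphism $g : \bft \to \bft'$ acts by post-composition with $\iota_i(g)$, whose $i$-th component fixes $1 \in [t_i+1]$ and sends the remaining elements into $\{2, \ldots, t_i'+1\}$ via $x \mapsto g_i(x-1)+1$. I would then partition the $k$-basis $\Cm(\bfn, \bft + \bfo_i)$ of $\Sigma_i M(\bfn)(\bft)$ according to the preimage, under the $i$-th component $\alpha_i : [n_i] \to [t_i+1]$, of the element $1$: either $1 \notin \operatorname{im}(\alpha_i)$, or $\alpha_i(a) = 1$ for a unique $a \in [n_i]$. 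Since $\iota_i(g)$ fixes $1$ and preserves $\{2,3,\ldots\}$, each block of the partition spans a $\Cm$-submodule. The block with $1 \notin \operatorname{im}(\alpha_i)$ is carried isomorphically onto $M(\bfn)$ by pushing the $i$-th component down through the bijection $\{2, \ldots, t_i+1\} \cong [t_i]$; for each of the $n_i$ elements $a$, the corresponding block is carried isomorphically onto $M(\bfn - \bfo_i)$ by deleting $a$ from $[n_i]$ and relabelling order-preservingly (the other components untouched). Summing yields $\Sigma_i M(\bfn) \cong M(\bfn) \oplus M(\bfn - \bfo_i)^{\oplus n_i}$. The one point requiring care—and the place where a sloppy argument would fail—is checking that these bijections of bases really are maps of $\Cm$-modules, i.e. that they intertwine post-composition with $\iota_i(g)$ and post-composition with $g$; this is the main (mild) obstacle, and it is just index bookkeeping.

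For (2), I would identify the image of the natural map $M(\bfn) \to \Sigma_i M(\bfn)$. That map is induced by the morphism $\bft \to \bft + \bfo_i$ whose $i$-th component is the inclusion $x \mapsto x+1$ (this choice is forced, being the unique one making $\operatorname{Id} \to \iota$ natural), so its image at $\bft$ is spanned by exactly those $\alpha$ factoring through it, i.e. those with $1 \notin \operatorname{im}(\alpha_i)$—precisely the summand $M(\bfn)$ produced in (1). Hence its cokernel $D_i M(\bfn)$ is the complementary summand $M(\bfn - \bfo_i)^{\oplus n_i}$.

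Statements (3) and (4) are then formal. For (3): when $i = j$ there is nothing to prove, and when $i \neq j$ the self-embeddings $\iota_i$ and $\iota_j$ act on disjoint coordinates of $\Cm \cong \C^{[m]\setminus\{i,j\}} \times \C \times \C$ and commute on the nose, so $\Sigma_i \Sigma_j V = V \circ \iota_j \circ \iota_i = V \circ \iota_i \circ \iota_j = \Sigma_j \Sigma_i V$. For (4): since $\Sigma_i$ is exact and $D_j V = \operatorname{coker}(V \to \Sigma_j V)$, applying $\Sigma_i$ gives $\Sigma_i D_j V = \operatorname{coker}(\Sigma_i V \to \Sigma_i \Sigma_j V) = \operatorname{coker}(\Sigma_i V \to \Sigma_j \Sigma_i V)$; one then checks that after applying $\Sigma_i$ the structure map agrees with the canonical map defining $D_j \Sigma_i V$ (for $i \neq j$ it agrees literally, and for $i = j$ up to the evident transposition automorphism of $\Sigma_i^2$, which is natural and hence induces the required isomorphism on cokernels), with naturality in $V$ automatic. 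I do not anticipate any genuine difficulty here beyond keeping the indices straight.
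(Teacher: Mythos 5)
The paper gives no proof of this lemma---it is quoted verbatim from \cite[Lemma 2.3]{LiYu}---so there is no argument of the paper's to compare against; your proposal is correct and is the standard proof. In particular your treatment of the only delicate points is right: the basis partition in (1) by the fibre of $1$ under the $i$-th component is stable under post-composition with $\iota_i(g)$ precisely because $\iota_i(g)$ fixes $1$, and in the $i=j$ case of (4) the two structure maps $\Sigma_i V \to \Sigma_i^2 V$ do differ by the natural transposition automorphism of $\Sigma_i^2$, so the cokernels agree up to natural isomorphism, which is what the lemma is used for.
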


In the situation that $V$ is finitely generated, one can apply the shift functors to eliminate its torsion part.

\begin{lemma}[{\cite[Lemma 4.8]{LiYu}}] \label{lemma - existence_of_a_sufficient_large_n_to_ensure_zero_kernel}
Let $V$ be a finitely generated $\CmG$-module over a field of characteristic 0 and $i \in [m]$. Then $K_i \Sigma_i^n V = 0$ for $n$ sufficiently large.
\end{lemma}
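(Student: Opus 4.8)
The plan is to reduce the assertion to a boundedness property of finitely generated $\{i\}$-torsion modules. First I would recall that the natural transformation $\mathrm{Id}\to\Sigma_i$ exhibits $K_iV$ as the kernel of a $\CmG$-module homomorphism $V\to\Sigma_iV$; hence $K_iV$ is a submodule of $V$, so it is finitely generated by the locally Noetherian property of $\CmG$ over $k$. Moreover every element of $K_iV(\bfn)$ is annihilated by the standard inclusion $\bfn\to\bfn+\bfo_i$, a morphism with $\deg_i=1$ and $\deg_{[m]\setminus\{i\}}=0$, so $K_iV$ is an $\{i\}$-torsion module. Using the Remark that $K_i$ commutes with $\Sigma_i$, we get $K_i\Sigma_i^nV\cong\Sigma_i^nK_iV$; consequently it suffices to prove that $\Sigma_i^nW=0$ for $n$ large whenever $W$ is a finitely generated $\{i\}$-torsion $\CmG$-module.

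Next I would show that such a $W$ is \emph{bounded in the $i$-direction}, meaning there is an integer $D$ with $W(\bfn)=0$ whenever $\deg_i(\bfn)>D$. Granting this, $(\Sigma_i^nW)(\bfn)=W(\bfn+n\bfo_i)=0$ once $n>D$, which completes the argument. To prove boundedness, fix generators $w_1,\dots,w_r$ of $W$ lying in $W(\bfn_1),\dots,W(\bfn_r)$. Since $W$ is $\{i\}$-torsion, each $w_j$ is killed by a morphism $\alpha_j$ with $e_j:=\deg_i(\alpha_j)>0$ and $\deg_{[m]\setminus\{i\}}(\alpha_j)=0$. As $\alpha_j$ does not enlarge the components indexed by $[m]\setminus\{i\}$, it factors as $\alpha_j=\delta_j\circ\nu_j$, where $\nu_j$ is an automorphism of $\bfn_j$ (a permutation on each component $\neq i$, the identity on the $i$-th component, and a group element of $G$) and $\delta_j:\bfn_j\to\bfn_j+e_j\bfo_i$ is the identity on all components $\neq i$ and has trivial $G$-part. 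Replacing $w_j$ by $\nu_j\cdot w_j$, which generates the same submodule $\langle w_j\rangle$ of $W$, I may assume $w_j$ is killed by a morphism $\delta_j$ of this ``pure $i$-direction'' type.

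Then, post-composing $\delta_j$ with the automorphisms of its target and using that the symmetric group acts transitively on the set of injections between two fixed finite sets, we conclude that $w_j$ is killed by \emph{every} morphism $\bfn_j\to\bfn_j+e_j\bfo_i$ that is the identity on the components $\neq i$ and has trivial $G$-part. Finally, any morphism $\beta:\bfn_j\to\bfn$ with $\deg_i(\beta)\geqslant e_j$ factors as $\beta=\beta''\circ\delta_j'$ with $\delta_j'$ of that same special type: one restricts the $i$-th component of $\beta$ through an intermediate set of size $\deg_i(\bfn_j)+e_j$ and corrects by a permutation. Hence $\beta\cdot w_j=\beta''\cdot(\delta_j'\cdot w_j)=0$, so $\langle w_j\rangle(\bfn)=0$ whenever $\deg_i(\bfn)\geqslant\deg_i(\bfn_j)+e_j$, and $D:=\max_{1\leqslant j\leqslant r}\bigl(\deg_i(\bfn_j)+e_j\bigr)-1$ works, proving the boundedness claim and hence the lemma.

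I expect the main obstacle to be the transitivity--factorization step at the end, which is precisely what upgrades ``$w_j$ is killed by some morphism of positive $i$-degree'' into ``$w_j$ is killed by every morphism of $i$-degree at least $e_j$''; everything before and after it is routine bookkeeping with the product-category structure. The locally Noetherian property enters only to guarantee that $K_iV$ is finitely generated, and the characteristic-zero hypothesis, though our standing assumption, is not actually used in this particular argument.
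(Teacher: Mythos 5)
Your proof is correct. The paper gives no argument for this lemma at all---it is quoted directly from \cite[Lemma 4.8]{LiYu}---and your argument is the standard one underlying that reference: use $K_i\Sigma_i^n\cong\Sigma_i^nK_i$ together with local Noetherianity to reduce to showing that a finitely generated $\{i\}$-torsion module vanishes in sufficiently large $i$-degree, and then prove that vanishing by the automorphism/factorization argument (transitivity of the symmetric group on injections, plus factoring any morphism of $i$-degree $\geqslant e_j$ through one of $i$-degree exactly $e_j$). Your closing remarks are also accurate: the characteristic-zero hypothesis is not used (the result in \cite{LiYu} holds over any commutative Noetherian ring), while the locally Noetherian property is genuinely needed to know that $K_iV$ is finitely generated.
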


\subsection{Slices and $S$-Homology groups}

In this subsection we introduce $S$-homology groups and $S$-homological degrees. As before, let $S$ be a nonempty subset of $[m]$. The category $\CmG$ is a product of the category $\CS$ and the category $\CTG$. We will see later that the study about the injectivity of any finitely generated $\CmG$-module can be reduced to that of modules over the subcategory $\Rs = \Aut(\bfs) \times \CTG$ of $\CmG$ for certain object $\bfs \in \obj(\CS)$. In view of this observation, it is necessary to introduce the functor $\overline{M}(\bfs) \otimes_\Rs -$ that produces an $\CmG$-module from an $\Rs$-module.
\begin{definition} \label{definition - the exact tensor functor on R-modules}
For object $\bfs \in \obj(\CS)$, let $\Rs := \Aut(\bfs) \times \CTG$ be the subcategory of $\CmG$. There is a functor
\[
\overline{M}(\bfs) \otimes_\Rs -: \Rs \Mod \to \CmG \Mod
\]
where $\overline{M}(\bfs) := M(\bfs) \boxtimes k\CTG$ is a $(\CmG, \Rs)$-bimodule, $M(\bfs)$ is free as an $\CS$-module, and the category algebra $k\CTG$ is a $(\CTG, \CTG)$-bimodule. We will denote this functor by $\Fs$ throughout this paper.
\end{definition}

In the above definition, the right $\Rs$-module structure of $\overline{M}(\bfs)$ follows from the right $k\Aut(\bfs)$-module structure of $M(\bfs)$ together with the right $\CTG$-module structure of $k\CTG$. One has that $\overline{M}(\bfs)$ is free as right $\Rs$-module since the $\CS$-module $M(\bfs)$ is free as right $k\Aut(\bfs)$-module. Therefore the functor $\Fs$ is exact and preserves projectives.

\begin{definition} \label{definition - slice}
For an object $\bfs \in \obj(\C^S)$, we define the \textit{slice} of $V$ on $\bfs$ to be the $\CmG$-module (which is also an $\Rs$-module)
\[
    V[[\bfs]] = \bigoplus_{\bft \in \obj(\C^{\neg S}_G)} V(\bfs \times \bft).
\]
\end{definition}

We present here an observation. For a finitely generated $\CmG$-module $V$, the slice $V[[\bfs]]$ may not be a submodule of $V$. However, for the homology module $\Hzero(V)$ that is defined in the next paragraph, the slice $\Hzero(V)[[\bfs]]$ is a direct summand of it and it is a direct sum of finitely many such summands. Also, for $i \in S$, the $\CmG$-module $K_i V$ is a direct sum of its slices on objects in the category ${\C}^{\{i\}}$.

Now we are ready to define $S$-homology groups for $\CmG$-modules. Let $\mathfrak{I}_S$ be the free $k$-module spanned by all morphisms $\alpha$ with $\deg_S(\alpha) > 0$. It is easy to see that $\mathfrak{I}_S$ is a two-sided ideal of the category algebra $k\CmG$. For $V \in \CmG \Mod$, define
\[
\Hzero(V) = k\CmG/\mathfrak{I}_S \otimes_{k\CmG} V.
\]
 Note that the module $\Hzero(V)$ is isomorphic to the quotient $V/\IS V$. Further, for $i > 0$, we define the $i$-th $S$-\textit{homology functor} to be the $i$-th left derived functor of $\Hzero$ and denote it by $\Hi$. The $\CmG$-module $\Hi(V)$ is called the $i$-th $S$-\textit{homology group} of $V$.

For $i \geqslant 0$, we define the \textit{$i$-th \Shd} to be the integer
\[
\ti(V) = \sup \{\deg(\bfs) \mid \Hi(V)[[\bfs]] \neq 0, \; \bfs \in \obj(\CS)\}.
\]
If the set on the right hand side is empty, we set $\ti(V) = -1$.

We collect some elementary results about $S$-homology groups in the following lemmas.

\begin{lemma} \label{lemma - inequality of S-homolgical degree in a long exact sequence}
For a short exact sequence of $\CmG$-module
\begin{equation*}
0 \to V' \to V \to V'' \to 0,
\end{equation*}
we have
\begin{enumerate}
    \item $t_{i+1}^S(V'') \leqslant max\{t_{i+1}^S(V), \; \ti(V')\}$
    \item $\ti(V) \leqslant max\{\ti(V'), \; \ti(V'')\}$
    \item $\ti(V') \leqslant max\{\ti(V), \; t_{i + 1}^S(V'')\}$
\end{enumerate}
for $i \geqslant 0$.
\end{lemma}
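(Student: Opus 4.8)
The plan is to derive all three inequalities from the long exact sequence of $S$-homology groups attached to the given short exact sequence, after applying the slice functor on an arbitrary object of $\CS$.

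First I would recall that, since $\CmG \Mod$ has enough projectives, the right exact functor $\Hzero$ admits left derived functors $\Hi$, so the short exact sequence $0 \to V' \to V \to V'' \to 0$ induces a long exact sequence
\[
\cdots \to H_{i+1}^S(V) \to H_{i+1}^S(V'') \to \Hi(V') \to \Hi(V) \to \Hi(V'') \to H_{i-1}^S(V') \to \cdots
\]
of $\CmG$-modules, terminating in $\Hzero(V') \to \Hzero(V) \to \Hzero(V'') \to 0$. In particular, for each $i \geqslant 0$ one has the three-term exact pieces $H_{i+1}^S(V) \to H_{i+1}^S(V'') \to \Hi(V')$, $\Hi(V') \to \Hi(V) \to \Hi(V'')$, and $H_{i+1}^S(V'') \to \Hi(V') \to \Hi(V)$.

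Next I would observe that the slice functor $W \mapsto W[[\bfs]] = \bigoplus_{\bft \in \obj(\CTG)} W(\bfs \times \bft)$ is exact, being a direct sum of evaluation functors. Applying it to the long exact sequence above yields, for each fixed $\bfs \in \obj(\CS)$ and each $i \geqslant 0$, exact sequences of the three forms
\[
H_{i+1}^S(V)[[\bfs]] \to H_{i+1}^S(V'')[[\bfs]] \to \Hi(V')[[\bfs]], \qquad \Hi(V')[[\bfs]] \to \Hi(V)[[\bfs]] \to \Hi(V'')[[\bfs]],
\]
\[
H_{i+1}^S(V'')[[\bfs]] \to \Hi(V')[[\bfs]] \to \Hi(V)[[\bfs]].
\]

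Finally, each inequality follows by a degree count. For (1), take any $\bfs$ with $\deg(\bfs) > \max\{t_{i+1}^S(V),\, \ti(V')\}$; then $H_{i+1}^S(V)[[\bfs]] = 0$ and $\Hi(V')[[\bfs]] = 0$ by the definition of the $S$-homological degrees, so the middle term $H_{i+1}^S(V'')[[\bfs]]$ is squeezed between two zero modules in the first exact sequence and hence vanishes; this gives $t_{i+1}^S(V'') \leqslant \max\{t_{i+1}^S(V),\, \ti(V')\}$. Statements (2) and (3) are proved in exactly the same way using the second and third exact sequences respectively. The argument is entirely formal once the long exact sequence and the exactness of slicing are in hand, so there is essentially no obstacle; the only point needing a little care is the bookkeeping of which two homology groups control the slice of the third, which is routine.
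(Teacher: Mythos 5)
Your proof is correct and follows the same route as the paper, which simply invokes the long exact sequence of $S$-homology groups and reads off the three inequalities; your added details (exactness of the slice functor and the explicit degree count on three-term pieces) are just an expansion of that one-line argument.
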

\begin{proof}
The conclusion follows from the long exact sequence
\[
    \ldots \to \Hi(V') \to \Hi(V) \to \Hi(V'') \to H_{i-1}^S(V') \to \ldots \to \Hzero(V') \to \Hzero(V) \to \Hzero(V'') \to 0.
\]
\end{proof}

\begin{lemma} \label{lemma_derivative_functor_decreases_generating_degree}
If $V \in \CmG \Mod$ is nonzero, then $\tzero (\DS V) = \tzero(V)-1$.
\end{lemma}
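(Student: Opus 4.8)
The plan is to reduce to a single shift functor $\Sigma_i$ (equivalently $D_i$) by induction on $|S|$, since $\DS = \bigoplus_{i\in S} D_i$ and $\tzero$ of a direct sum is the supremum of the $\tzero$'s, and then to analyze $\tzero(D_i V)$ directly via the presentation of $V$ by free modules. More precisely, I would first observe that $\tzero(\DS V) = \max_{i\in S}\tzero(D_i V)$, so it suffices to prove $\tzero(D_i V) = \tzero(V) - 1$ for each $i \in S$ (here the "$-1$" refers to the $i$-th coordinate degree being lowered, but since $\tzero$ measures total $S$-degree this is consistent, using Lemma~\ref{lemma - properties_of_shift_and_derivative_functors}(2) which gives $D_i M(\bfn) \cong M(\bfn - \bfo_i)^{\oplus n_i}$). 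The key computational input is exactly this formula: the derivative functor takes a free module $M(\bfn)$ to a sum of copies of $M(\bfn - \bfo_i)$, so generators in $i$-th degree $n_i$ produce generators in $i$-th degree $n_i - 1$.

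The main step is then to pass from free modules to arbitrary finitely generated $V$. Choose a free presentation $F_1 \to F_0 \to V \to 0$ where $F_0 = \bigoplus_j M(\bfn^{(j)})$ with $\tzero(V) = \tzero(F_0) = \max_j \deg(\bfn^{(j)})$ (one can always arrange $F_0$ to be minimal in the sense that $\Hzero(F_0) \to \Hzero(V)$ is an isomorphism, or at least that no summand has degree exceeding $\tzero(V)$). Applying the right-exact functor $\DS$ — which is $\KS$-exact on $\Stf$ modules but in general one must be slightly careful — I would instead work directly with $\Hzero(\DS V)$. The cleanest route: $\tzero(\DS V)$ is controlled from above by $\tzero(\DS F_0) = \tzero(F_0) - 1 = \tzero(V) - 1$, using right-exactness of $\DS$ (it is a cokernel of a natural transformation applied to the right-exact $\ShiftS$, hence right exact) together with Lemma~\ref{lemma - inequality of S-homolgical degree in a long exact sequence}(2) applied to $\DS F_0 \twoheadrightarrow \DS V$ won't literally apply since that's not a short exact sequence, so instead I use $\DS F_1 \to \DS F_0 \to \DS V \to 0$ and the surjection from $\DS F_0$. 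For the reverse inequality $\tzero(\DS V) \geqslant \tzero(V) - 1$, I would take a generator $v \in V(\bfn)$ with $\deg_S(\bfn) = \tzero(V)$ not lying in $\IS V$, pick $i \in S$ with $n_i > 0$ (possible since $\tzero(V) = \deg(\bfs) \geqslant 1$ when $V \neq 0$ forces some coordinate positive — if $\tzero(V) = 0$ then $V$ is $S$-torsion and one checks $\DS V$ has $\tzero = -1$, consistent with $0 - 1$), and show that the image of $v$ in $\Sigma_i V(\bfn - \bfo_i) = V(\bfn)$ survives in $D_i V = \Sigma_i V / V$ and generates a nonzero class in $\Hzero(D_i V)[[\bfs - \bfo_i]]$.

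The hard part, I expect, will be the reverse inequality: producing a nonvanishing class in $\Hzero(D_i V)$ of the right degree. The subtlety is that the natural map $V \to \Sigma_i V$ need not be injective (its kernel is $K_i V$), so an element that is a nonzero generator of $V$ need not map to a nonzero element of $\Sigma_i V$, and even if it does, it might become a sum of lower-degree generators modulo the image of $V$. I would handle this by first reducing to the $\Stf$ case: write $0 \to V_T^{\{i\}} \to V \to V_F^{\{i\}} \to 0$; since $V_T^{\{i\}}$ is eventually zero along direction $i$ one argues its contribution to $\tzero$ in the relevant degree can be separated, or one invokes that $D_i V_T^{\{i\}}$ has strictly smaller relevant support, reducing to $V$ being $\{i\}$-torsion free, where $V \hookrightarrow \Sigma_i V$ and the cokernel $D_i V$ can be analyzed cleanly via Lemma~\ref{lemma_properties_of_C_module}(4). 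In the $\Stf$ case, one has $0 \to V \to \Sigma_i V \to D_i V \to 0$ exact, and comparing with $0 \to F_0 \to \Sigma_i F_0 \to D_i F_0 \to 0$ via the presentation, together with the exact formula $D_i M(\bfn) \cong M(\bfn - \bfo_i)^{\oplus n_i}$, pins down $\tzero(D_i V)$ exactly as $\tzero(V) - 1$ by a diagram chase in the homology long exact sequences, using Lemma~\ref{lemma - inequality of S-homolgical degree in a long exact sequence}.
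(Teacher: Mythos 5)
Your upper bound $\tzero(\DS V)\leqslant\tzero(V)-1$ (surject a free module $P$ with $\tzero(P)=\tzero(V)$ onto $V$, apply the right-exact $\DS$, and use $D_iM(\bfn)\cong M(\bfn-\bfo_i)^{\oplus n_i}$) is exactly the paper's argument and is fine. A small slip at the outset: it does not suffice, and is not in general true, that $\tzero(D_iV)=\tzero(V)-1$ for \emph{each} $i\in S$, since $D_i$ may kill the top-degree generators when their $i$-th coordinate vanishes; one only needs the equality for \emph{some} $i$, which your later discussion implicitly acknowledges.

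The lower bound is where your proposal has a genuine gap. The reduction to the $\{i\}$-torsion-free case does not work when the top-degree generators of $V$ are torsion: for $m=1$ and $V$ the simple module concentrated at $[n]$ one has $V_F^{\{1\}}=0$, while $DV=\Sigma V$ is concentrated exactly in degree $n-1=\tzero(V)-1$; so $D_iV_T^{\{i\}}$ does \emph{not} have ``strictly smaller relevant support'', and its contribution is precisely what must be captured. Moreover, the inclusion $D_iV_T^{\{i\}}\hookrightarrow D_iV$ yields no inequality $\tzero(D_iV)\geqslant\tzero(D_iV_T^{\{i\}})$, since generating degrees of submodules are not controlled by those of the ambient module in that direction. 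Even in the torsion-free case the crux --- showing that the class of a top-degree generator $v$ in $D_iV$ does not lie in $\IS\cdot D_iV$, i.e.\ is not regenerated from lower degrees --- is exactly what your ``diagram chase'' leaves unproved. The paper's device, which is the missing idea, handles all cases uniformly with no torsion/torsion-free dichotomy: choose $\bfs\in\obj(\CS)$ with $\deg(\bfs)=\tzero(V)$ and $\Hzero(V)[[\bfs]]\neq 0$, let $V'$ be the submodule generated by the slices $V[[\bft]]$ with $\bfs\npreceq\bft$, and pass to $V''=V/V'$. Then $V''$ is nonzero, generated by $V''[[\bfs]]$, and supported only on objects whose $\CS$-component is $\succcurlyeq\bfs$; hence for any $i\in S$ with $s_i>0$ the module $D_iV''$ is supported only on objects $\succcurlyeq\bfs-\bfo_i$ and satisfies $D_iV''[[\bfs-\bfo_i]]\cong V''[[\bfs]]\neq 0$, which forces $\tzero(D_iV'')\geqslant\deg(\bfs)-1$ because there is nothing in lower degrees from which to generate it. The surjection $\DS V\twoheadrightarrow\DS V''$ then gives $\tzero(\DS V)\geqslant\tzero(V)-1$. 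I recommend replacing your torsion-free reduction by this quotient construction.
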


\begin{proof}
    As explained in \cite[Lemma 1.5]{two_homological}, we only need to deal with the case that $\tzero(V)$ is finite. If $\tzero(V) = 0$, then an argument similar to the following one will yield $\DS P = 0$ hence $\DS V = 0$, as required.  Now we assume that $\tzero(V) > 0$. Since $V$ is finitely generated, we may find a surjection $P \to V$ where $P$ is a finitely generated projective $\CmG$-modules with $\tzero(P) = \tzero(V)$. Since the functor $\DS$ is right exact, the map $\DS P \to \DS V$ is surjective. By Lemma \ref{lemma - properties_of_shift_and_derivative_functors}, we have that $\tzero(\DS P) = \tzero(P) - 1$. Then one can deduce that $\tzero(\DS V) \leqslant \tzero(\DS P) = \tzero(P) - 1 = \tzero(V) - 1$.

    On the other hand, there is an object $\bfs \in \obj(\CS)$ such that $\deg(\bfs) = \tzero(V) \geqslant 1$ and $\Hzero(V)[[\bfs]] \neq 0$. Now let $V'$ be the submodule of $V$ generated by $V[[\bft]]$ with $\bfs \npreceq \bft$ and $\bft \in \obj(\CS)$. Then $V[[\bfs]] \not \subseteq V'$ since otherwise one should have $\Hzero(V)[[\bfs]] = 0$. The surjection $V \to V/V' \to 0$ induces a surjection $\DS V \to \DS(V/V') \to 0$, and hence $\tzero(\DS V) \geqslant \tzero(\DS (V/V'))$. Note that $(V/V')[[\bft]] \neq 0$ implies $\bfs \preccurlyeq \bft$ for $\bft \in \obj(\CS)$. Thus, for $i \in S$, we have that $\Sigma_i(V/V')[[\bft]] \neq 0$ implies $\bft \succcurlyeq \bfs - \bfo_i$ and that $D_i(V/V')[[\bft]] \neq 0$ implies $\bft \succcurlyeq \bfs - \bfo_i$. As a result, we have $\tzero(D_i(V/V')) \geqslant \deg(\bfs) - 1 = \tzero(V) - 1$ and so is $\tzero(\DS(V/V'))$. Combining the two inequalities, one obtains that $\tzero(\DS(V)) \geqslant \tzero(\DS(V/V')) \geqslant \tzero(V) - 1$, as required.
\end{proof}

\section{$S$-induced modules and $S$-semi-induced modules}

In this section we consider $S$-induced modules and $S$-semi-induced modules, which are generalizations of induced modules and semi-induced modules considered in \cite{LiYu}. \textbf{All modules considered in the rest of this paper are finitely generated unless otherwise specified.}

\begin{definition}
A finitely generated $\CmG$-module $V$ is said to be \textit{\Sinduced} if $V \cong \Fs (W)$ for certain object $\bfs$ in $\CS$ and some $\Rs$-module $W$.
\end{definition}

The \Sinduced~module $V$ has the following universal property. Let $W$ be an $\Rs$-module (also viewed as an $\CmG$-module) and $N$ an $\CmG$-module. Then a homomorphism $W \to N$ as $\CmG$-module uniquely extends to a homomorphism $\Fs(W) \to N$ as $\CmG$-module.

\begin{definition} \label{definition - S-relative projective}
An $\CmG$-module $V$ is said to be \textit{$S$-semi-induced} if it has a finite filtration
\[
0 = V^0 \subseteq V^1 \subseteq \ldots \subseteq V^n = V
\]
such that for each $i$, the quotient module $V^{i+1}/V^i$ is \Sinduced.
\end{definition}

We remark that when $S = [m]$, $S$-semi-induced modules coincide with relative projective modules defined in \cite{LiYu}. Further, over a field of characteristic zero, relative projective modules coincide with projective modules.

\begin{lemma} \label{lemma_properties_of_module_genrated_on_one_slice}
Let $V$ be an $\CmG$-module generated by its slice $V[[\bfs]]$ for a certain object $\bfs \in \obj(\C^S)$. One has:
    \begin{enumerate}
    \item \label{statement - equivalent condition for basic relative projective modules} The following are equivalent:
        \begin{itemize}
        \item $V$ is an \Sinduced~module;
        \item $\Hi(V) = 0$ for all $i \geqslant 1$;
        \item $\Hone(V) = 0$.
        \end{itemize}
    \item If $V$ is $S$-induced, then it is \Stf. \label{statement - S-relative_projective_implies_S-torsion_free}
    \item If $\tone(V) \leqslant \tzero(V)$, then $V$ is \Sinduced. \label{statement - t1 < t0_implies_S-acyclic}
    \item If $V$ is \Sinduced, then $\Sigma_i V$ is $S$-semi-induced and $D_i V$ is \Sinduced~for all $i \in S$. \label{statement - shift_and_derivative_functors_on_S-acyclic_module}
    \end{enumerate}
\end{lemma}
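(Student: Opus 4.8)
The plan is to present $V$ canonically as a quotient of an $S$-induced module and then extract all four statements from the long exact sequence of $S$-homology groups. Since $V$ is generated by the slice $V[[\bfs]]$, every value $V(\bfs'\times\bft)$ is obtained from $V[[\bfs]]$ by a morphism of non-negative $S$-degree; hence $V$ vanishes in $S$-degrees below $\deg(\bfs)$, the submodule $\IS V$ vanishes on the $\bfs$-slice, and therefore $\Hzero(V)$ is supported precisely in $S$-degree $\deg(\bfs)$, where it coincides with the $\Rs$-module $V[[\bfs]]$; in particular $\tzero(V)=\deg(\bfs)$ whenever $V\neq0$. Moreover the restriction of $V$ to $\Rs$ is exactly $V[[\bfs]]$, so the universal property of $\Fs$ recorded after Definition~\ref{definition - the exact tensor functor on R-modules} yields a natural surjection $\pi\colon\Fs(V[[\bfs]])\to V$ which is the identity on the $\bfs$-slice. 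I would then work with the short exact sequence $0\to N\to\Fs(V[[\bfs]])\xrightarrow{\pi}V\to0$, where $N=\ker\pi$; note that $N$ and $\Fs(V[[\bfs]])$ are finitely generated and supported in $S$-degrees $\geq\deg(\bfs)$, and that $N$ vanishes on the $\bfs$-slice since $\pi$ is the identity there.

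Everything then rests on two structural facts about $\Fs$, both generalizations to an arbitrary $S$ and to $\CmG$ of facts from \cite{LiYu}. First, for every $\Rs$-module $W$ one has $\Hzero(\Fs(W))\cong W$ and $\Hi(\Fs(W))=0$ for $i\geq1$: I would resolve $W$ by projective $\Rs$-modules, apply the exact, projective-preserving functor $\Fs$ to get a projective resolution of $\Fs(W)$ over $\CmG$, and check by evaluation on representable $\Rs$-modules that $\Hzero\circ\Fs$ is the exact functor sending an $\Rs$-module to the corresponding $\CmG$-module supported on the $\bfs$-slice. Second, for $i\in S$ there should be an isomorphism of $(\CmG,\Rs)$-bimodules $\Sigma_i\Mbar(\bfs)\cong\Mbar(\bfs)\oplus\Di\Mbar(\bfs)$ under which the canonical map $\Mbar(\bfs)\to\Sigma_i\Mbar(\bfs)$ is the inclusion of the first summand, together with an isomorphism of $(\CmG,\Rs)$-bimodules $\Di\Mbar(\bfs)\cong\Mbar(\bfs-\bfo_i)\otimes_{k\mathscr{R}_{\bfs-\bfo_i}}k\Rs$ (read as $0$ when $s_i=0$); this is the bimodule refinement of Lemma~\ref{lemma - properties_of_shift_and_derivative_functors}(1),(2), realizing $\Di\Mbar(\bfs)$ as $\Mbar(\bfs-\bfo_i)$ induced on the right-module side along the inclusion $\Aut(\bfs-\bfo_i)\subseteq\Aut(\bfs)$. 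Tensoring these bimodule statements with $W$ over $\Rs$ --- which is compatible with $\Sigma_i$ and, for $i\in S$, with $\Di$, since those functors only reindex the $\CmG$-variable --- shows that for $i\in S$ and $V\cong\Fs(W)$ the natural map $V\to\Sigma_iV$ is split injective and $\Sigma_iV\cong V\oplus\Di V$ with $\Di V\cong F_{\bfs-\bfo_i}(\Res^{\Rs}_{\mathscr{R}_{\bfs-\bfo_i}}W)$.

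Granting these, the four parts are short. For part (1), the first fact shows that every $S$-induced module has $\Hi=0$ for $i\geq1$, giving two of the three implications at once; conversely, if $\Hone(V)=0$, the long exact sequence of $\Hi$ attached to $0\to N\to\Fs(V[[\bfs]])\to V\to0$, combined with $\Hone(\Fs(V[[\bfs]]))=0$ and the fact that $\Hzero(\pi)$ is the identity of $V[[\bfs]]$, forces $\Hzero(N)=0$, whence $N=0$ by a Nakayama-type argument (an object of least $S$-degree on which the finitely generated module $N$ is nonzero would satisfy $\IS N=0$ there), so $V\cong\Fs(V[[\bfs]])$ is $S$-induced. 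For part (2), we may then take $V\cong\Fs(W)$, so $K_iV=0$ for all $i\in S$ by the split injectivity of $V\to\Sigma_iV$, and $V$ is \Stf\ by Lemma~\ref{lemma_properties_of_C_module}(1). For part (3), assume $\tone(V)\leq\tzero(V)$ with $V\neq0$, so $\tzero(V)=\deg(\bfs)$; the same long exact sequence gives $\Hone(V)\cong\Hzero(N)$, hence $\Hzero(N)$ is supported in $S$-degrees $\leq\deg(\bfs)$ and, since $N$ is, also in $S$-degrees $\geq\deg(\bfs)$, so $\Hzero(N)$ lives only in $S$-degree $\deg(\bfs)$; this forces $N$ to be generated by its $\bfs$-slice, which is $0$, so $N=0$ and $V$ is $S$-induced. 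For part (4), $\Di V\cong F_{\bfs-\bfo_i}(\Res^{\Rs}_{\mathscr{R}_{\bfs-\bfo_i}}W)$ is $S$-induced and $\Sigma_iV\cong V\oplus\Di V$, a direct sum of two $S$-induced modules, is $S$-semi-induced.

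I expect the main obstacle to be the second structural fact --- the bimodule splitting of $\Sigma_i\Mbar(\bfs)$ and the precise identification of $\Di\Mbar(\bfs)$ --- together with verifying that $\Hzero\circ\Fs$ is the expected exact functor; once this bookkeeping about how $\Fs$, $\Sigma_i$ and $\Di$ interact has been carried out in the present generality, the rest is a formal diagram chase and a Nakayama-type argument.
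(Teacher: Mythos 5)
Your proposal is correct, and for parts (1) and (3) it is essentially the paper's argument: both present $V$ as a quotient $0\to N\to\Fs(V[[\bfs]])\to V\to0$, use the acyclicity of $\Fs$ of projectives to get the forward direction of (1), and kill $N$ via the long exact sequence together with the support constraints ($N$ lives in $S$-degrees $\geqslant\deg(\bfs)$ but vanishes on the $\bfs$-slice) and a Nakayama-type argument. Where you genuinely diverge is in parts (2) and (4). The paper proves (2) by a direct element-level computation (the argument of \cite[Lemma 4.2(2)]{LiYu}, showing morphisms of positive $S$-degree act injectively on $\Fs(W)$), and proves (4) homologically: it takes a projective presentation $0\to K\to P\to V\to 0$ by $S$-induced modules generated on the $\bfs$-slice, applies $D_i$ using Lemma \ref{lemma_properties_of_C_module}(\ref{statement_short_sequence_is_exact}), compares $\tzero$ and $\tone$ of $D_iV$ via the resulting long exact sequence, and invokes part (3); the statement about $\Sigma_iV$ then follows from $D_iV\cong\Sigma_iV/V$. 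You instead establish the $(\CmG,\Rs)$-bimodule splitting $\Sigma_i\Mbar(\bfs)\cong\Mbar(\bfs)\oplus D_i\Mbar(\bfs)$ with $D_i\Mbar(\bfs)$ identified as $\overline{M}(\bfs-\bfo_i)$ induced along $\Aut(\bfs-\bfo_i)\subseteq\Aut(\bfs)$, and tensor with $W$; this is the correct $\Cm$-analogue of the classical FI bimodule decomposition and it does hold (the decomposition of $\Cm(\bfs,\bfn+\bfo_i)$ into maps missing or hitting the new point is stable under precomposition by $\Aut(\bfs)$, with the second piece a free right $\Aut(\bfs-\bfo_i)\backslash\Aut(\bfs)$-induced set). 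Your route costs the extra bookkeeping you acknowledge, but buys strictly more than the paper proves: the canonical map $V\to\Sigma_iV$ is split, $\Sigma_iV\cong V\oplus D_iV$, and $D_iV$ is explicitly $F_{\bfs-\bfo_i}$ of a restriction of $W$, whereas the paper only concludes that $D_iV$ is $S$-induced and $\Sigma_iV$ is $S$-semi-induced. Both arguments are sound.
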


\begin{proof}
(1): Suppose that $V$ is an \Sinduced~module. Then one has $V \cong \overline{M}(\bfs) \otimes_{\Rs} V[[\bfs]]$ where $\Rs$ is defined as in Definition \ref{definition - the exact tensor functor on R-modules}. Take a projective presentation $0 \to W \to P \to V[[\bfs]] \to 0$ of the $\Rs$-module $V[[\bfs]]$. Applying the functor $\overline{M}(\bfs) \otimes_{\Rs} -$, we get a projective presentation of the $\CmG$-module $V$ as follows
\begin{equation*}
0 \to \overline{M}(\bfs) \otimes_{\Rs} W \to \overline{M}(\bfs) \otimes_{\Rs} P \to (\overline{M}(\bfs) \otimes_{\Rs}  V[[\bfs]]) \cong V \to 0.
\end{equation*}
Applying the functor $\kCmI \otimes_{k \Cm} -$ we recover the original short exact sequence. That is, $\Hone(V) = 0$. Replacing $V$ by $V' = \overline{M}(\bfs) \otimes_{\Rs} W$ we deduce that $\Htwo(V) = 0$. Recursively, for every $i \geqslant 1$, one gets $\Hi(V) = 0$.

Conversely, suppose that $\Hone(V) = 0$. Since $V$ is generated by $V[[\bfs]]$, there is a short exact sequence of $\CmG$-module
\begin{equation} \label{sequence - short exact sequence}
0 \to K \to N \to V \to 0.
\end{equation}
where $N = \overline{M}(\bfs) \otimes_{\Rs} V[[\bfs]]$. The long exact sequence of $\Cm$-modules
\begin{equation} \label{sequence - long exact sequence}
\ldots \to \Hone(V) = 0 \to \Hzero(K) \to \Hzero (N) = V[[\bfs]] \to \Hzero(V) = V[[\bfs]] \to 0
\end{equation}
implies that $\Hzero(K) = 0$. That is, $K = 0$, and hence $V \cong N$ is \Sinduced.

(2): By definition, we have that $V \cong \overline{M}(\bfs) \otimes_\Rs V[[\bfs]]$. Let $\bfs'$ and $\bfs''$ be objects in $\CS$ with $\bfs \prec \bfs' \preccurlyeq \bfs''$, $\bft$ an object in $\C_G^{\neg S}$, and $\bff: \bfs' \times \bft \to \bfs'' \times \bft$ a morphism in $\CmG$. By an argument similar to the proof of \cite[Lemma 4.2(2)]{LiYu}, we have that $\bff \cdot v \neq 0$ for non-zero element $v \in V(\bfs' \times \bft)$. Therefore $V$ is \Stf.

(3): Again, consider exact sequences \eqref{sequence - short exact sequence} and \eqref{sequence - long exact sequence}. Since $\tone(V) \leqslant \tzero(V)$, by Lemma \ref{lemma - inequality of S-homolgical degree in a long exact sequence} we know that $\tzero(K) \leqslant \max \{\tone(V), \, \tzero(N) = \tzero(V)\} = \tzero(V) = \deg(\bfs)$, which means that $\Hzero (K)[[\bft]] \neq 0$ only if $\deg(\bft) \leqslant \deg(\bfs)$ for $\bft \in \obj(\CS)$. The sequence \eqref{sequence - short exact sequence} yields a short exact sequence of $\Rs$-modules
\begin{equation*}
    0 \to K[[\bfs]] \to N[[\bfs]] \to V[[\bfs]] \to 0.
\end{equation*}
So $K[[\bfs]] = 0$ since $N[[\bfs]] = V[[\bfs]]$. We have that $N[[\bfs']] \neq 0$ only if $\bfs' \succcurlyeq \bfs$ by our construction of $N$, so is the submodule $K$ of $N$. Putting the established results together we obtain that $\Hzero(K) = 0$, so $K = 0$. Therefore, $V \cong N$ is \Sinduced.

(4): As shown in the proof of statement (1), there is a short exact sequence $0 \to K \to P \to V \to 0$ such that $P$ is a projective $\CmG$-module generated by $P[[\bfs]]$. By the previous arguments we know that all terms in this sequence are \Sinduced~modules generated by their slice on the object $\bfs$, and hence are \Stf.
By Statement (\ref{statement_short_sequence_is_exact}) of Lemma \ref{lemma_properties_of_C_module}, for each $i \in S$, we get a short exact sequence $0 \to D_iK \to D_iP \to D_i V \to 0$. Since $D_i V = 0$ whenever $s_i$, the $i$-th component of $\bfs$, is zero, without loss of generality we assume that $s_i > 0$. Then $D_i V$ is generated by its slice on the object $\bfs - \bfo_i$ since so is $D_iP$. Replacing $V$ by $K$ one knows that $D_i K$ is also generated by its slice on $\bfs - \bfo_i$. The module $D_i P$ being projective implies that $\Hone(D_i P) = 0$, the long exact sequence of homology groups induced by $0 \to D_i K \to D_i P \to D_i V \to 0$ tells us that $\tzero(D_i V) = \tzero(D_i P) \geqslant \tzero (D_iK) \geqslant \tone(D_iV)$. By statement (\ref{statement - t1 < t0_implies_S-acyclic}), $D_iV$ is \Sinduced. But $D_i V \cong \Sigma_i V / V$, so $\Sigma_i V$ is $S$-semi-induced since $V$ is \Sinduced.
\end{proof}

In the following proposition we describe two homological characterizations of $S$-semi-induced modules.

\begin{proposition} \label{proposition - relations_between_S-relative_projective_and_H_1}
For $V \in \CmG \module$, the following are equivalent:
\begin{enumerate}
\item $V$ is an $S$-semi-induced module; \label{statement - is S-relative projective}
\item $\Hi(V) = 0$ for all $i \geqslant 1$; \label{statement - H_i = 0 for i > 0}
\item $\Hone(V) = 0$. \label{statement - H_1 = 0}
\end{enumerate}
\end{proposition}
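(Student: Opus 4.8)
My plan is to prove the cycle of implications $(\ref{statement - is S-relative projective}) \Rightarrow (\ref{statement - H_i = 0 for i > 0}) \Rightarrow (\ref{statement - H_1 = 0}) \Rightarrow (\ref{statement - is S-relative projective})$, of which only the first and last carry real content. For $(\ref{statement - is S-relative projective}) \Rightarrow (\ref{statement - H_i = 0 for i > 0})$ I would fix a filtration $0 = V^0 \subseteq V^1 \subseteq \cdots \subseteq V^n = V$ with \Sinduced~quotients as in Definition \ref{definition - S-relative projective}. Every \Sinduced~module is generated by one of its slices, so Lemma \ref{lemma_properties_of_module_genrated_on_one_slice}(\ref{statement - equivalent condition for basic relative projective modules}) gives $\Hi(V^{j+1}/V^j) = 0$ for all $i \geqslant 1$. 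An induction on $j$, feeding the short exact sequences $0 \to V^j \to V^{j+1} \to V^{j+1}/V^j \to 0$ into the long exact sequence of $S$-homology, then yields $\Hi(V^j) = 0$ for all $i \geqslant 1$; the case $j = n$ is the claim. The implication $(\ref{statement - H_i = 0 for i > 0}) \Rightarrow (\ref{statement - H_1 = 0})$ is immediate.

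The substance is $(\ref{statement - H_1 = 0}) \Rightarrow (\ref{statement - is S-relative projective})$, which I would prove by induction on $d := \tzero(V)$. If $d = -1$ then $\Hzero(V) = 0$, hence $V = 0$, which is vacuously $S$-semi-induced. Now let $d \geqslant 0$, and let $V_{<d}$ be the submodule of $V$ generated by all $V(\bfn)$ with $\deg_S(\bfn) < d$; put $\overline{V} := V/V_{<d}$. By construction $\tzero(V_{<d}) \leqslant d - 1$ and $\overline{V}$ vanishes on every object of $S$-degree $< d$; since $\overline{V}$ is a nonzero finitely generated quotient of $V$ (nonzero because otherwise $V = V_{<d}$ would have $\tzero < d$), this forces $\tzero(\overline{V}) = d$. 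The first step is to show $\overline{V}$ is $S$-semi-induced. Because $\CS$ admits no non-isomorphism between objects of equal degree, $\overline{V}$ decomposes as the finite direct sum $\bigoplus \overline{V}_{\bfs}$ over the isomorphism classes of objects $\bfs \in \obj(\CS)$ with $\deg(\bfs) = d$ and $\overline{V}[[\bfs]] \neq 0$, where $\overline{V}_{\bfs}$ is the submodule generated by the slice $\overline{V}[[\bfs]]$; each $\overline{V}_{\bfs}$ is thus generated by a single slice and has $\tzero(\overline{V}_{\bfs}) = d$. On the other hand, Lemma \ref{lemma - inequality of S-homolgical degree in a long exact sequence}(1) applied to $0 \to V_{<d} \to V \to \overline{V} \to 0$, together with $\tone(V) = -1$ from the hypothesis, gives
\[
\tone(\overline{V}) \leqslant \max\{\tone(V),\, \tzero(V_{<d})\} \leqslant \max\{-1,\, d-1\} = d - 1 < d = \tzero(\overline{V}_{\bfs}),
\]
so $\tone(\overline{V}_{\bfs}) \leqslant \tzero(\overline{V}_{\bfs})$ and Lemma \ref{lemma_properties_of_module_genrated_on_one_slice}(\ref{statement - t1 < t0_implies_S-acyclic}) makes each $\overline{V}_{\bfs}$ \Sinduced; hence $\overline{V}$ is $S$-semi-induced.

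To complete the induction I would then apply the already-proved implication $(\ref{statement - is S-relative projective}) \Rightarrow (\ref{statement - H_i = 0 for i > 0})$ to $\overline{V}$: this gives $\Htwo(\overline{V}) = 0$, and the long exact sequence of $S$-homology for $0 \to V_{<d} \to V \to \overline{V} \to 0$ then exhibits $\Hone(V_{<d})$ as a quotient of $\Htwo(\overline{V}) = 0$ (here the hypothesis $\Hone(V) = 0$ is used), so $\Hone(V_{<d}) = 0$. Since $\tzero(V_{<d}) \leqslant d - 1 < d$, the induction hypothesis says $V_{<d}$ is $S$-semi-induced. Finally $V$ sits in the short exact sequence $0 \to V_{<d} \to V \to \overline{V} \to 0$ with both outer terms $S$-semi-induced; concatenating a filtration of $V_{<d}$ with the preimages under $V \to \overline{V}$ of a filtration of $\overline{V}$ produces a filtration of $V$ with \Sinduced~quotients, so $V$ is $S$-semi-induced and the induction closes.

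The main obstacle is exactly this inductive step: one must peel off a submodule that strictly decreases the generating invariant $\tzero$ while still controlling $\Hone$, and the choice of $V_{<d}$ together with the quotient $\overline{V} = V/V_{<d}$ is what makes this possible. The essential computation is the degree estimate $\tone(\overline{V}) < \tzero(\overline{V})$, which lets Lemma \ref{lemma_properties_of_module_genrated_on_one_slice}(\ref{statement - t1 < t0_implies_S-acyclic}) take over on the single-slice summands of $\overline{V}$, after which the vanishing $\Htwo(\overline{V}) = 0$ transports the hypothesis down to the lower-degree submodule $V_{<d}$.
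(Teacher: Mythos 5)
Your implications (\ref{statement - is S-relative projective}) $\Rightarrow$ (\ref{statement - H_i = 0 for i > 0}) $\Rightarrow$ (\ref{statement - H_1 = 0}) coincide with the paper's argument. For (\ref{statement - H_1 = 0}) $\Rightarrow$ (\ref{statement - is S-relative projective}) you take a genuinely different route: the paper argues in one step, using that $\Hzero(V)$ decomposes automatically into its slices $W_j = \Hzero(V)[[\bfs_j]]$ (because $\IS$ annihilates $\Hzero(V)$), surjecting $N = \bigoplus_j F_{\bfs_j}(W_j)$ onto $V$, and deducing from $\Hone(V)=0$ that the kernel has vanishing $\Hzero$ and is therefore zero; no induction on $\tzero$ is needed. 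Your inductive peeling of the top $S$-degree is a viable alternative, but as written it has a gap.

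The gap is the claim that $\overline{V} = \bigoplus_{\bfs} \overline{V}_{\bfs}$ \emph{because} $\CS$ admits no non-isomorphism between objects of equal degree. That observation only yields $\overline{V}_{\bfs}[[\bfs']] = 0$ for distinct $\bfs, \bfs'$ of degree $d$; it does not prevent the submodules $\overline{V}_{\bfs}$ from meeting in higher degrees, so the sum need not be direct for a general module generated in degree $d$ and vanishing below. Concretely, for $m=2$ and $S=[2]$, let $a$ and $b$ be the canonical basis vectors of $M(\bfo_1)$ and $M(\bfo_2)$ at the object $([1],[1])$ and let $W$ be the quotient of $M(\bfo_1)\oplus M(\bfo_2)$ by the submodule generated by $(a,-b)$: then $W$ is generated by its two degree-one slices and vanishes in degree zero, yet the submodules generated by those slices intersect nontrivially at $([1],[1])$. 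In your situation the decomposition does hold, but only because of the bound $\tone(\overline{V}) \leqslant d-1$ you derived: the kernel $K$ of $\bigoplus_{\bfs}\overline{V}_{\bfs} \to \overline{V}$ vanishes in $S$-degrees $\leqslant d$, the map $\Hzero(K) \to \Hzero(\bigoplus_{\bfs}\overline{V}_{\bfs})$ is zero for degree reasons, so the long exact sequence exhibits $\Hzero(K)$ as a quotient of $\Hone(\overline{V})$, and the incompatible degree constraints force $\Hzero(K)=0$, hence $K=0$. Alternatively one can avoid the direct sum altogether by filtering $\overline{V}$ by the partial sums $U_j = \overline{V}_{\bfs_1}+\cdots+\overline{V}_{\bfs_j}$ and bounding $\tone(U_j/U_{j-1})$ via Lemma \ref{lemma - inequality of S-homolgical degree in a long exact sequence}. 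Either repair also fixes the subsequent step $\tone(\overline{V}_{\bfs}) \leqslant \tone(\overline{V})$, which presupposes the direct sum. The remainder of your induction (the vanishing of $\Htwo(\overline{V})$, the transfer of the hypothesis to $V_{<d}$, and the concatenation of filtrations) is correct.
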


\begin{proof}
(\ref{statement - is S-relative projective}) $\Rightarrow$ (\ref{statement - H_i = 0 for i > 0}): Let
\[
0 = V^0 \subseteq V^1 \subseteq \ldots \subseteq V^n = V
\]
be the filtration described in Definition \ref{definition - S-relative projective}. We prove by making induction on the superscripts of modules in this filtration. By statement (\ref{statement - equivalent condition for basic relative projective modules}) of Lemma \ref{lemma_properties_of_module_genrated_on_one_slice} we have that $\Hi(V^1) = 0$. Assume that $\Hi(V^k) = 0$. We have a short exact sequence
\[
0 \to V^k \to V^{k+1} \to V^{k+1}/V^k \to 0
\]
where $V^{k+1}/V^k$ is \Sinduced. Again, by statement (\ref{statement - equivalent condition for basic relative projective modules}) of Lemma \ref{lemma_properties_of_module_genrated_on_one_slice}, we have that $\Hi(V^{k+1}/V^k) = 0$ for $i \geqslant 1$. The long exact sequence
\[
\ldots \to \Hi(V^k) = 0 \to \Hi(V^{k+1}) \to \Hi(V^{k+1}/V^k) = 0 \to \ldots
\]
tells us that $\Hi(V^{k+1}) = 0$. The conclusion then follows by induction.

(\ref{statement - H_i = 0 for i > 0}) $\Rightarrow$ (\ref{statement - H_1 = 0}): trivial.

(\ref{statement - H_1 = 0}) $\Rightarrow$ (\ref{statement - is S-relative projective}): On the one hand, the comment following Definition \ref{definition - slice} says that the $\CmG$-module $\Hzero(V)$ is a direct sum of $\CmG$-modules $W_j$, where $W_j = \Hzero(V)[[\bfs_j]]$ is an $\mathscr{R}_{\bfs_j}$-module (also an $\CmG$-module) for  a set of objects $\bfs_j$ in $\CS$ and $1 \leqslant j \leqslant l$ for some positive integer $l$. In other words, $\Hzero(V) \cong \bigoplus_{j = 1}^l W_j$. On the other hand, we obtain a short exact sequence of $\CmG$-modules
\[
0 \to K \to N \to V \to 0.
\]
where $N := \bigoplus_{j = 1}^l F_{\bfs_j}(W_j)$ and $K$ is the kernel of the natural surjection $N \to V$. Since $\Hone(V) = 0$ and $\Hzero(N) = \bigoplus_{j = 1}^l W_j$, we have the long exact sequence
\[
\ldots \to \Hone(V) = 0 \to \Hzero(K) \to \bigoplus_{j = 1}^l W_j \to \Hzero(V) \to 0
\]
Since $\Hzero(V) \cong \bigoplus_{j = 1}^l W_j$, the kernel $\Hzero(K)$ is zero hence $K = 0$. Therefore $V \cong N$ is $S$-semi-induced.
\end{proof}

\begin{corollary} \label{corollary - properties_of_relative_projective_modules}
If $V$ is $S$-semi-induced, then it is \Stf, and $\Sigma_i V$ and $D_i V$ are $S$-semi-induced as well for all $i \in S$.
\end{corollary}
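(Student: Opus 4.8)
The plan is to peel off the filtration of an $S$-semi-induced module one step at a time, feeding each step into the already-established good behaviour of \Sinduced~modules. Fix a filtration $0 = V^0 \subseteq V^1 \subseteq \cdots \subseteq V^n = V$ with every quotient $V^j/V^{j-1}$ an \Sinduced~module; observe that each $V^j$ is itself $S$-semi-induced via the truncated filtration, and that an \Sinduced~module is generated by its slice on the relevant object of $\CS$ (as recorded in the proof of Lemma \ref{lemma_properties_of_module_genrated_on_one_slice}), so that Lemma \ref{lemma_properties_of_module_genrated_on_one_slice} applies to it.

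First I would establish $S$-torsion freeness by induction on the length $n$ of the filtration. The case $n = 0$ is trivial. For the inductive step, consider the short exact sequence $0 \to V^{n-1} \to V \to V/V^{n-1} \to 0$: the submodule $V^{n-1}$ is $S$-semi-induced with a shorter filtration, hence \Stf~by the inductive hypothesis, while $V/V^{n-1}$ is \Sinduced~and therefore \Stf~by the second statement of Lemma \ref{lemma_properties_of_module_genrated_on_one_slice}. Since \Stf~modules are closed under extension (the third statement of Lemma \ref{lemma_properties_of_C_module}), $V$ is \Stf.

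For $\Sigma_i V$ with $i \in S$, I would apply the exact functor $\Sigma_i$ to the filtration to obtain $0 = \Sigma_i V^0 \subseteq \cdots \subseteq \Sigma_i V^n = \Sigma_i V$ with subquotients $\Sigma_i V^j / \Sigma_i V^{j-1} \cong \Sigma_i(V^j/V^{j-1})$. By the fourth statement of Lemma \ref{lemma_properties_of_module_genrated_on_one_slice} each of these is $S$-semi-induced, so splicing the witnessing filtrations into the above one yields a single filtration of $\Sigma_i V$ all of whose subquotients are \Sinduced; hence $\Sigma_i V$ is $S$-semi-induced. The only slightly delicate point in the whole argument is this routine refinement step, namely that an iterated extension of $S$-semi-induced modules is again $S$-semi-induced.

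For $D_i V$ with $i \in S$, the key is that $D_i$ remains exact on the sequences in play: the cokernel $V^j/V^{j-1}$ is \Sinduced, hence \Stf, so by the fourth statement of Lemma \ref{lemma_properties_of_C_module} the sequence $0 \to D_i V^{j-1} \to D_i V^j \to D_i(V^j/V^{j-1}) \to 0$ is again exact. Thus $0 = D_i V^0 \subseteq \cdots \subseteq D_i V^n = D_i V$ is a filtration whose subquotient $D_i V^j / D_i V^{j-1} \cong D_i(V^j/V^{j-1})$ is \Sinduced~by the fourth statement of Lemma \ref{lemma_properties_of_module_genrated_on_one_slice}, so $D_i V$ is $S$-semi-induced. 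I do not expect a genuine obstacle here: the corollary is a formal consequence of Lemmas \ref{lemma_properties_of_C_module} and \ref{lemma_properties_of_module_genrated_on_one_slice}, the only things to watch being the \Stf~hypothesis that makes $D_i$ exact on short exact sequences and the filtration bookkeeping in the $\Sigma_i$ case.
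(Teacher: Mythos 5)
Your proposal is correct and follows essentially the same route as the paper: $S$-torsion freeness via closure under extension applied along the filtration, $\Sigma_i$ via exactness of the shift functor plus statement (4) of Lemma \ref{lemma_properties_of_module_genrated_on_one_slice}, and $D_i$ via statement (4) of Lemma \ref{lemma_properties_of_C_module} to keep the derived filtration exact. The only difference is that you explicitly spell out the refinement step (splicing filtrations when the shifted subquotients are merely $S$-semi-induced rather than $S$-induced), which the paper leaves implicit; that is a welcome clarification, not a deviation.
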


\begin{proof}
Suppose that $V$ admits a filtration
\[
0 = V^0 \subseteq V^1 \subseteq \ldots \subseteq V^n = V
\]
with each factor \Sinduced. The first part of the statement follows from statement (\ref{statement - S-relative_projective_implies_S-torsion_free}) of Lemma \ref{lemma_properties_of_module_genrated_on_one_slice} and the fact that \Stf~modules are closed under extension. Now we prove the second part of the statement. For $i \in S$, applying the exact functor $\Sigma_i$ to this filtration we get a filtration of $\Sigma_i V$ with factor
\[
\Sigma_i V^k /\Sigma_i V^{k-1} \cong \Sigma_i(V^k/V^{k-1})
\]
which is $S$-semi-induced by statement (\ref{statement - shift_and_derivative_functors_on_S-acyclic_module}) of Lemma \ref{lemma_properties_of_module_genrated_on_one_slice}. Therefore $\Sigma_i V$ is $S$-semi-induced. A similar argument together with statement (\ref{statement_short_sequence_is_exact}) of Lemma \ref{lemma_properties_of_C_module} shows that $D_i V$ is $S$-semi-induced.
\end{proof}

The following lemma is crucial for us to prove the first main result of this paper.

\begin{lemma} \label{lemma - derivative_functor_and_S-relative_projective}
Let $V$ be an \Stf~$\CmG$-module. If $\DS V$ is $S$-semi-induced, so is $V$.
\end{lemma}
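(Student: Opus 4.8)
The target is to show $\Hone(V)=0$, which by Proposition~\ref{proposition - relations_between_S-relative_projective_and_H_1} is equivalent to the conclusion. Note first that $\DS V=\bigoplus_{i\in S}D_iV$ being $S$-semi-induced forces each $D_iV$ to be $S$-semi-induced, since the criterion $\Hone(-)=0$ of Proposition~\ref{proposition - relations_between_S-relative_projective_and_H_1} is inherited by direct summands. The plan is to induct on $\tzero(V)$, reducing the inductive step to the case in which $V$ is generated by a single slice $V[[\bfs]]$; in that case I will show $V$ is $S$-induced by producing the bound $\tone(V)\le\tzero(V)$ and invoking Lemma~\ref{lemma_properties_of_module_genrated_on_one_slice}(\ref{statement - t1 < t0_implies_S-acyclic}). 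The base case $\tzero(V)\le 0$ is direct: then $\DS V=0$ by Lemma~\ref{lemma_derivative_functor_decreases_generating_degree}, so $D_iV=0$ for all $i\in S$, and since $V$ is $S$-torsion free the natural maps $V\to\Sigma_iV$ are isomorphisms, which forces $V$ to be $S$-induced (on the initial object of $\CS$).

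For the single-slice case, suppose $V$ is generated by $V[[\bfs]]$ with $\deg(\bfs)=d=\tzero(V)$. Since $V$, hence $\Hzero(V)$, is then supported at objects of $\CS$-degree $\ge d$, one gets $(\IS V)[[\bfs]]=0$ and $\Hzero(V)$ is supported exactly at $\bfs$, so the natural surjection $N:=\Fs(V[[\bfs]])\twoheadrightarrow V$ is an isomorphism on $\Hzero$. Writing $0\to K\to N\to V\to 0$ for its kernel, $K$ is $S$-torsion free (a submodule of the $S$-induced, hence $S$-torsion free, module $N$, by Lemma~\ref{lemma_properties_of_module_genrated_on_one_slice}(\ref{statement - S-relative_projective_implies_S-torsion_free})), and since $V$ is $S$-torsion free Lemma~\ref{lemma_properties_of_C_module}(\ref{statement_short_sequence_is_exact}) gives an exact sequence $0\to\DS K\to\DS N\to\DS V\to 0$. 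Both $\DS N$ (Corollary~\ref{corollary - properties_of_relative_projective_modules}) and $\DS V$ (hypothesis) are $S$-semi-induced, so the long exact sequence of $S$-homology forces $\Hone(\DS K)=0$ and gives an injection $\Hzero(\DS K)\hookrightarrow\Hzero(\DS N)$; hence $\tzero(\DS K)\le\tzero(\DS N)=\tzero(N)-1=d-1$ by Lemma~\ref{lemma_derivative_functor_decreases_generating_degree}, and applying that lemma again yields $\tzero(K)\le d$. On the other hand the long exact sequence for $0\to K\to N\to V\to 0$ together with $\Hone(N)=0$ gives $\Hone(V)\cong\Hzero(K)$, so $\tone(V)=\tzero(K)\le d=\tzero(V)$, and Lemma~\ref{lemma_properties_of_module_genrated_on_one_slice}(\ref{statement - t1 < t0_implies_S-acyclic}) shows $V$ is $S$-induced.

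I expect the main obstacle to be the reduction of the inductive step to this single-slice case. The natural move is to let $V'\subseteq V$ be the submodule generated by the slices $V[[\bfs]]$ with $\deg(\bfs)=d$; then $\Hzero(V/V')$ is supported in $\CS$-degree $\le d-1$, and, using that distinct objects of $\CS$ of the same degree admit no morphism between them, $V'$ is built from submodules each generated by a single slice. One then wants $V'$ and $V/V'$ to be $S$-torsion free with $S$-semi-induced derivative, so that $V'$ is $S$-semi-induced by the single-slice case, $V/V'$ by the inductive hypothesis, and hence $V$ is $S$-semi-induced as a module filtered by $S$-semi-induced pieces. The delicate point is that a quotient of an $S$-torsion-free module need not be $S$-torsion free, so controlling the $S$-torsion of $V/V'$ — and transporting the condition ``$\DS(-)$ is $S$-semi-induced'' across the relevant short exact sequences, via the exactness of $\DS$ on $S$-torsion-free quotients (Lemma~\ref{lemma_properties_of_C_module}(\ref{statement_short_sequence_is_exact})) together with the $\Hone$-criterion — is where the real work lies; this presumably requires ordering the finitely many generating slices compatibly with $\preccurlyeq$ and with $\CS$-degree.
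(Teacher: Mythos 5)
Your single-slice computation is sound and in fact reproduces the paper's key estimate: from $0\to K\to N\to V\to 0$ with $N=\Fs(V[[\bfs]])$, the exactness of $\DS$ on this sequence (valid because $V$ is \Stf), the hypothesis $\Hone(\DS V)=0$, and Lemma \ref{lemma_derivative_functor_decreases_generating_degree}, you correctly extract $\tone(V)\leqslant\tzero(K)\leqslant\tzero(V)$ and then invoke Lemma \ref{lemma_properties_of_module_genrated_on_one_slice}(\ref{statement - t1 < t0_implies_S-acyclic}). The base case is also fine. The problem is the reduction: you explicitly leave open how to pass from a general $V$ to the single-slice situation, and the route you sketch (induct on $\tzero(V)$, let $V'$ be generated by \emph{all} top-degree slices, handle $V'$ by the single-slice case and $V/V'$ by induction) does not go through as stated. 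Two obstructions, both of which you name but do not resolve: $V/V'$ need not be \Stf, and $\DS(V/V')$ is not known to be $S$-semi-induced, so the inductive hypothesis does not apply to it; moreover $V'$ itself is generated by several mutually incomparable slices whose generated submodules can intersect in higher degrees, so even $V'$ is not covered by your single-slice case without a further filtration whose subquotients again fail the torsion-freeness and $\DS$-hypotheses. Since this is exactly ``where the real work lies,'' the proof is incomplete.

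The paper's way around this is worth contrasting with your plan. It inducts on the cardinality of $O=\{\bfs\in\obj(\CS)\mid\Hzero(V)[[\bfs]]\neq 0\}$ and peels off a \emph{single} maximal-degree object $\bfs$, but as a \emph{quotient} rather than a submodule: $V'$ is the submodule generated by the slices on $O\setminus\{\bfs\}$ and $V''=V/V'$ is generated by the one slice $V''[[\bfs]]$. The quotient $V''$ is then shown to be \Sinduced\ \emph{directly} (not by induction), via $\tone(V'')\leqslant\max\{\tzero(V'),\tone(V)\}\leqslant\tzero(V)=\tzero(V'')$ together with the inequality $\tone(V)\leqslant\tzero(V)$ you also proved; this sidesteps the need to know in advance that the quotient is \Stf, since torsion-freeness of $V''$ comes out a posteriori from its being \Sinduced. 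The inductive hypothesis is instead applied to the \emph{submodule} $V'$, which is automatically \Stf, has smaller $|O|$, and whose derivative $\DS V'$ is seen to be $S$-semi-induced from the exact sequence $0\to\DS V'\to\DS V\to\DS V''\to 0$ (exact because $V''$ is now known to be \Stf) and the $\Hone$-criterion of Proposition \ref{proposition - relations_between_S-relative_projective_and_H_1}. If you reorganize your argument along these lines — one maximal slice at a time, quotient handled by the degree bound, submodule handled by induction — your estimate does the rest.
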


\begin{proof}
The conclusion holds for $V = 0$ trivially, so we may assume that $V \neq 0$. Since $V$ is finitely generated, the set
\[
O= \{ \bfs \in \obj(\CS) \mid \Hzero(V)[[\bfs]] \neq 0 \}
\]
is finite. We prove by an induction on the cardinality of $O$. The conclusion holds clearly if $|O| = 0$. For $|O| \geqslant 1$, we choose an object $\bfs \in O$ such that $\deg(\bfs)$ is maximal; that is, $\deg(\bfs) = \tzero(V)$ (of course, this choice might not be unique). Let $V'$ be the submodule of $V$ generated by its slices on objects in $O \setminus \{ \bfs \}$. Then $V'' = V / V'$ is nonzero and is generated by $V''[[\bfs]]$.

Consider the short exact sequence $0 \to V' \to V \to V'' \to 0$. We claim that $V''$ is an \Sinduced~$\CmG$-module. To see this, from the long exact sequence of homology groups one has
\begin{equation*}
\tone(V'') \leqslant \max \{ \tzero(V'), \, \tone(V) \} \leqslant \max \{ \tzero(V), \, \tone(V) \},
\end{equation*}
where the second inequality follows from $\tzero(V') \leqslant \tzero(V)$ by our construction of $V'$. Furthermore, let $0 \to W \to P \to V \to 0$ be a short exact sequence of $\CmG$-modules such that $P$ is a free $\CmG$-module satisfying $\tzero(V) = \tzero(P)$. Applying $\DS$ we get another short exact sequence $0 \to \DS W \to \DS P \to \DS V \to 0$ such that $\DS P$ is also free and satisfies $\tzero(\DS V) = \tzero (\DS P)$ by Lemma \ref{lemma_derivative_functor_decreases_generating_degree}. Then one has
\begin{equation*}
\tone(V) \leqslant \tzero(W) = \tzero(\DS W) + 1 \leqslant \max \{ \tone(\DS V), \, \tzero(\DS V) \} + 1 \leqslant \tzero(\DS V) + 1 = \tzero(V)
\end{equation*}
where the third inequality follows from $\Hone(\DS V) = 0$ by Proposition \ref{proposition - relations_between_S-relative_projective_and_H_1}. Putting the above two inequalities together we conclude that $\tone(V'') \leqslant \tzero(V) = \tzero(V'')$. By statement (\ref{statement - t1 < t0_implies_S-acyclic}) of Lemma \ref{lemma_properties_of_module_genrated_on_one_slice}, $V''$ is \Sinduced~as claimed.

The conclusion follows after we show that $V'$ is $S$-semi-induced. By the induction hypothesis, it suffices to show that $\DS V'$ is $S$-semi-induced. Since $V''$ is \Sinduced, so is $\DS V''$ by statement (\ref{statement - shift_and_derivative_functors_on_S-acyclic_module}) of Lemma \ref{lemma_properties_of_module_genrated_on_one_slice} and it is \Stf~by statement (\ref{statement - S-relative_projective_implies_S-torsion_free}) of Lemma \ref{lemma_properties_of_module_genrated_on_one_slice}. By statement (\ref{statement_short_sequence_is_exact}) of Lemma \ref{lemma_properties_of_C_module}, we get a short exact sequence $0 \to \DS V' \to \DS V \to \DS V'' \to 0$. The long exact sequence of homology groups tells us that $\Hone(\DS V') = 0$, so $\DS V'$ is $S$-semi-induced by Proposition \ref{proposition - relations_between_S-relative_projective_and_H_1}.
\end{proof}

Now we are ready to prove the first theorem in the Introduction of this paper.

\begin{proof}[A proof of Theorem \ref{theorem - applying_shift_makes_any_module_S-relative_projective}]
By Lemma \ref{lemma - existence_of_a_sufficient_large_n_to_ensure_zero_kernel}, there exists a positive integer $c$ such that $\KS(\prod_{i \in S} \Sigma_i)^c V = 0$. Therefore $(\prod_{i \in S} \Sigma_i)^c V$ is \Stf~by statement (\ref{statement - relation_between_S-torsion-free_and_kernel_functor}) of Lemma \ref{lemma_properties_of_C_module}. Now we make induction on $\tzero(V)$. Assume that the conclusion holds for any $W \in \CmG \module$ with $\tzero(W) < \tzero(V)$. By Lemma \ref{lemma_derivative_functor_decreases_generating_degree}, we have that $\tzero(\DS V) < \tzero(V)$. By the induction hypothesis, there exists some integer $l > 0$ such that $(\prod_{i \in S} \Sigma_i)^l \DS V$ is $S$-semi-induced. Set $n := \max\{c,l\}$. By Corollary \ref{corollary - properties_of_relative_projective_modules} and the statement (4) of Lemma \ref{lemma - properties_of_shift_and_derivative_functors}, we have that
\[
\DS (\prod_{i \in S} \Sigma_i)^n V \cong (\prod_{i \in S} \Sigma_i)^n \DS V = (\prod_{i \in S} \Sigma_i)^{n-l} (\prod_{i \in S} \Sigma_i)^l \DS V
\]
is $S$-semi-induced. By statement (\ref{statement - shift_functor_preserves_S-torsion-free}) of Lemma \ref{lemma_properties_of_C_module}, we have that
\[
(\prod_{i \in S} \Sigma_i)^n V = (\prod_{i \in S} \Sigma_i)^{n - c} (\prod_{i \in S} \Sigma_i)^c V
\]
is \Stf. By Lemma \ref{lemma - derivative_functor_and_S-relative_projective}, we have that $(\prod_{i \in S} \Sigma_i)^n V$ is $S$-semi-induced.
\end{proof}

An immediate corollary is:

\begin{corollary} \label{proposition - S-torsion_free_module_can_be_embedded_into_S-relative_projective_module}
Let $V$ be a finitely generated $\CmG$-module. If $V$ is \Stf~then $V$ can be embedded into some finitely generated $S$-semi-induced $\CmG$-module.
\end{corollary}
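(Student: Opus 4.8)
The plan is to show directly that an $S$-torsion free finitely generated $\CmG$-module embeds into a sufficiently high iterated shift of itself, and then quote Theorem~\ref{theorem - applying_shift_makes_any_module_S-relative_projective} to see that this iterated shift is $S$-semi-induced (and it is automatically finitely generated). So the whole argument is: ``embed into a shift, then shift enough times.''

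Concretely, I would first recall that by statement~(\ref{statement - relation_between_S-torsion-free_and_kernel_functor}) of Lemma~\ref{lemma_properties_of_C_module} the hypothesis that $V$ is \Stf{} says exactly that $K_i V = 0$ for all $i \in S$, and that $K_i V$ is the kernel of the natural map $V \to \Sigma_i V$; hence this map is a monomorphism for every $i \in S$. Since moreover $\Sigma_i$ sends \Stf{} modules to \Stf{} modules for $i \in S$ (statement~(\ref{statement - shift_functor_preserves_S-torsion-free}) of Lemma~\ref{lemma_properties_of_C_module}), and the functors $\Sigma_i$ pairwise commute (Lemma~\ref{lemma - properties_of_shift_and_derivative_functors}), I can enumerate $S = \{i_1,\dots,i_k\}$ and form the chain of natural maps
\[
V \hookrightarrow \Sigma_{i_1}V \hookrightarrow \Sigma_{i_2}\Sigma_{i_1}V \hookrightarrow \cdots \hookrightarrow \Big(\prod_{i\in S}\Sigma_i\Big)V,
\]
in which each arrow is the natural embedding of an \Stf{} module into its shift along a direction of $S$, and each intermediate module is again \Stf{} by the preservation property just cited. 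Iterating this, the composite natural map $V \to (\prod_{i\in S}\Sigma_i)^n V$ is a monomorphism for every $n \geqslant 0$, and each $(\prod_{i\in S}\Sigma_i)^n V$ is finitely generated because the shift functors are exact and carry finitely generated free $\CmG$-modules to finitely generated free ones by Lemma~\ref{lemma - properties_of_shift_and_derivative_functors}.

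To finish, Theorem~\ref{theorem - applying_shift_makes_any_module_S-relative_projective} gives an integer $N$ for which $(\prod_{i\in S}\Sigma_i)^N V$ is $S$-semi-induced; combining this with the embedding $V \hookrightarrow (\prod_{i\in S}\Sigma_i)^N V$ from the previous paragraph yields the desired finitely generated $S$-semi-induced module into which $V$ embeds. I do not anticipate a real obstacle here, since the statement is essentially a repackaging of the shift theorem together with the observation that \Stf{} modules have no kernel under shifting; the only point deserving a sentence of care is that the composite $V \to (\prod_{i\in S}\Sigma_i)^N V$ really is injective, which holds because exact functors preserve monomorphisms and because $S$-torsion-freeness is retained at every intermediate stage, so that no new kernel is ever created along the way.
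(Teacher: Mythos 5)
Your proposal is correct and follows exactly the paper's own route: the paper's proof cites the same ingredients (Theorem~\ref{theorem - applying_shift_makes_any_module_S-relative_projective}, statements~(\ref{statement - relation_between_S-torsion-free_and_kernel_functor}) and~(\ref{statement - shift_functor_preserves_S-torsion-free}) of Lemma~\ref{lemma_properties_of_C_module}, and preservation of finite generation by $\Sigma_i$) and concludes with the same embedding $V \hookrightarrow (\prod_{i\in S}\Sigma_i)^n V$. You have merely spelled out in more detail the chain of monomorphisms that the paper leaves implicit.
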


\begin{proof}
This follows from Theorem \ref{theorem - applying_shift_makes_any_module_S-relative_projective}, statements (\ref{statement - relation_between_S-torsion-free_and_kernel_functor}) and (\ref{statement - shift_functor_preserves_S-torsion-free}) of Lemma \ref{lemma_properties_of_C_module} and the fact that the functor $\Sigma_i$ preserves finitely generated modules. Explicitly, there is an injective homomorphism $V \to (\prod_{i \in S} \Sigma_i)^n V$.
\end{proof}

\section{A Classification of Indecomposable Injective $\Cm$-Modules}

In this section we classify all indecomposable injective $\Cm$-modules. For this purpose, we firstly construct a class of modules that finitely cogenerates the category $\CmG \module$ and show that they are injective. Consequently, any finitely generated injective $\CmG$-module is isomorphic to a direct summand of a finite direct sum of modules in this class.

Let us introduce a few necessary notions. Let $\mathcal{C}$ be a small category. We denote by $\mathcal{C} \inj$ the category of all finitely generated injective $\mathcal{C}$-modules. For a $\mathcal{C}$-module $V$ and a class $\mathcal U$ of $\mathcal{C}$-modules, we say that $V$ is \textit{finitely cogenerated by} $\U$ if there exists a finite set $X$ and a map $f:X \to \U$ such that the $\mathcal{C}$-module $V$ can be embedded into the $\mathcal{C}$-module $\oplus_{x\in X} f(x)$.

As before, let $S$ be a nonempty subset of $[m]$. We denote by $\mathcal U^S_G$ the class of $\CSG$-modules
\[
    \mathcal U^S_G := \{ (\boxtimes_{i \in S} I_i) \boxtimes kG \mid  I_i \in \C \inj\}.
\]
We usually omit the subscript $G$ of $\U^S_G$ when there is no ambiguity. For brevity, we denote by $\U^n$ the class $\U^{[n]}$ for $1 \leqslant n \leqslant m$.

\begin{definition}
Let $\jmath: \Cm \to \CmG, \alpha \mapsto (\alpha, 1)$ be the embedding functor where $\alpha \in \mor(\Cm)$. It induces a pair $(\Ind, \Res)$ of functors
\[
\Res: \CmG \Mod \to \Cm \Mod; \quad \Ind: \Cm \Mod \to \CmG \Mod
\]
as follows: The functor $\Res$ sends an $\CmG$-module $W$ to the $\Cm$-module $W \circ \jmath$, and the functor $\Ind$ sends an $\Cm$-module $V$ to the $\CmG$-module $V \boxtimes kG \cong k\CmG \otimes_{k\Cm} V$ and sends an $\Cm$-morphism $\varphi : V \to V'$ to $\CmG$-morphism $\varphi \boxtimes Id : V \boxtimes kG \to V' \boxtimes kG$.
\end{definition}

We will show in Lemma \ref{lemma - adjuncation_of_Ind_and_Res} that $(\Ind, \Res)$ is an adjoint pair. Moreover, the functors $\Res$ and $\Ind$ are both exact functors, so $\Ind$ preserves projectives and $\Res$ preserves injectives.

\begin{lemma} \label{lemma - adjuncation_of_Ind_and_Res}
    The functor $\Res$ is right adjoint to $\Ind$.
\end{lemma}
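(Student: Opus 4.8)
The plan is to exhibit an explicit natural isomorphism
\[
\Hom_{\CmG}(\Ind V, W) \cong \Hom_{\Cm}(V, \Res W)
\]
for every $\Cm$-module $V$ and every $\CmG$-module $W$, natural in both variables. First I would recall that $\Ind V = V \boxtimes kG \cong k\CmG \otimes_{k\Cm} V$, so this is just the standard tensor-hom adjunction for the bimodule $k\CmG$, viewed as a $(k\CmG, k\Cm)$-bimodule via the embedding $\jmath$. The right-hand side $\Res W = W \circ \jmath$ is correspondingly $\Hom_{k\CmG}(k\CmG, W)$ restricted along $\jmath$, so the abstract adjunction $\Hom_{k\CmG}(k\CmG \otimes_{k\Cm} V, W) \cong \Hom_{k\Cm}(V, \Hom_{k\CmG}(k\CmG, W))$ together with the identification $\Hom_{k\CmG}(k\CmG, W) \cong \Res W$ gives the claim.

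Since the paper works with functor categories rather than module categories over a single algebra, I would instead give the unit and counit directly. For the unit $\eta_V : V \to \Res\Ind V$, note that $\Res\Ind V$ evaluated at an object $\bfn$ is $(V \boxtimes kG)(\bfn) = V(\bfn) \otimes_k kG$; define $\eta_V(\bfn) : v \mapsto v \otimes 1$. One checks this is a morphism of $\Cm$-modules because for $\alpha \in \mor(\Cm)$ the action on $V \boxtimes kG$ of $(\alpha,1)$ is $V(\alpha) \otimes \mathrm{id}$. For the counit $\varepsilon_W : \Ind\Res W \to W$, evaluated at $\bfn$ this is a map $W(\bfn) \otimes_k kG \to W(\bfn)$; define $\varepsilon_W(\bfn) : w \otimes g \mapsto W(\mathrm{id},g)(w) = g \cdot w$. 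One checks $\varepsilon_W$ is a morphism of $\CmG$-modules using that every morphism $(\alpha,g)$ in $\CmG$ factors as $(\alpha,1)(\mathrm{id},g)$ and that the $G$-action commutes appropriately. Then I would verify the two triangle identities $\varepsilon_{\Ind V} \circ \Ind(\eta_V) = \mathrm{id}_{\Ind V}$ and $\Res(\varepsilon_W) \circ \eta_{\Res W} = \mathrm{id}_{\Res W}$; both are immediate pointwise computations: on $V(\bfn) \otimes kG$ the first composite sends $v \otimes g$ to $v \otimes g$ since $\Ind(\eta_V)$ sends $v \otimes g$ to $(v\otimes 1)\otimes g$ and $\varepsilon_{\Ind V}$ then applies $g$, and the second is checked similarly.

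There is essentially no hard part here; the statement is a routine instance of the tensor-hom adjunction, and the only thing requiring minor care is keeping track of the left- versus right-module conventions and the fact that $G$ being a one-object category means a $\CmG$-module is exactly an $\Cm$-module together with a compatible $k$-linear $G$-action commuting with the $\Cm$-structure in the sense dictated by the product category. I would state the adjunction isomorphism concretely: given $\phi : \Ind V \to W$, the corresponding $\psi : V \to \Res W$ is $\psi = \Res(\phi) \circ \eta_V$, i.e. $\psi(\bfn)(v) = \phi(\bfn)(v \otimes 1)$; conversely given $\psi$, set $\phi(\bfn)(v \otimes g) = g \cdot \psi(\bfn)(v)$. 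Checking these are mutually inverse and natural is a direct verification, so in the write-up I would present the two functors, the formula for the bijection, and remark that naturality and bijectivity follow by inspection, citing the analogous statement for $\FI$ in \cite{GanLi} or \cite{Zeng} if an even shorter argument is desired.
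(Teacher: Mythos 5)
Your proposal is correct and matches the paper's argument: the paper also proves the adjunction by writing down the explicit bijection $\varphi \mapsto (v \mapsto \varphi(v \otimes 1_G))$ with inverse $f \mapsto (v \otimes g \mapsto g \cdot f(v))$ and declaring the verifications routine. Your unit/counit formulation with the triangle identities is just an equivalent packaging of the same data, so there is no substantive difference.
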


\begin{proof}
We prove by constructing the adjunction directly. For an $\Cm$-module $V$ and an $\CmG$-module $W$, let
\[
\theta_{VW}: \Hom_{\CmG}(\Ind(V), W) \to \Hom_{\Cm}(V, \Res(W))
\]
be the map such that for an $\CmG$-module homomorphism $\varphi : V \boxtimes kG \to W$ we have that
\[
\theta_{VW}(\varphi)(v) = \varphi(v \otimes_k 1_G)
\]
for $v \in V$. Conversely, one can define a map
\[
\theta_{VW}^{-1}: \Hom_{\Cm}(V, \Res(W)) \to \Hom_{\CmG}(\Ind(V), W)
\]
such that for an $\Cm$-module homomorphism $f: V \to W$,
\[
\theta_{VW}^{-1}(f)(v \otimes_k g) = (e_\bfn, g) \cdot f(v)
\]
for $v \in V(\bfn)$, $g \in G$, and $e_\bfn$ is the identity morphism on the object $\bfn \in \obj(\Cm)$. It is routine to check that the homomorphisms $\theta_{VW}(\varphi)$ and $\theta_{VW}^{-1}(f)$ are well-defined, that the map $\theta_{VW}$ is a bijection with inverse $\theta_{VW}^{-1}$, and that $\theta: \Hom_{\CmG}(\Ind(-),-) \to \Hom_{\Cm}(-,\Res(-))$ is a natural equivalence.
\end{proof}

The following lemma says that the functor $\Res$ preserves finitely generated modules.

\begin{lemma} \label{lemma - restriction_functor_preserves_f.g._modules}
    An $\CmG$-module $V$ is finitely generated if and only if $\Res(V)$ is finitely generated as $\Cm$-module.
\end{lemma}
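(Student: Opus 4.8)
The plan is to prove the two implications separately, each by an elementary chase of generators; I do not anticipate any real obstacle, only a small bookkeeping point concerning composition in $\CmG$.

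The backward implication is immediate. The functor $\jmath$ is the identity on objects, so $\Res(V)(\bfn) = V(\bfn)$ for every $\bfn \in \obj(\Cm)$, and the action of a morphism $\alpha$ of $\Cm$ on $\Res(V)$ is by definition the action of $(\alpha, 1)$ on $V$. Hence any finite set of elements that generates $\Res(V)$ over $\Cm$ also generates $V$ over $\CmG$, since $\CmG$ has at least as many morphisms as the image of $\Cm$ under $\jmath$. Thus finite generation of $\Res(V)$ forces finite generation of $V$.

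For the forward implication, suppose $V$ is generated as a $\CmG$-module by finitely many elements $v_1, \ldots, v_r$ with $v_j \in V(\bfn_j)$. The key point is a factorization of morphisms in $\CmG$: fixing the convention that morphisms compose componentwise, every morphism $(\alpha, g) \colon \bfn_j \to \bfn'$ satisfies $(\alpha, g) = (\alpha, 1) \circ (e_{\bfn_j}, g)$, where $e_{\bfn_j}$ is the identity morphism of $\bfn_j$. Applying $V$ gives $(\alpha, g) \cdot v_j = (\alpha, 1) \cdot \big( (e_{\bfn_j}, g) \cdot v_j \big)$, and $(\alpha, 1) = \jmath(\alpha)$ acts through the $\Cm$-module structure of $\Res(V)$. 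Therefore the set $\{ (e_{\bfn_j}, g) \cdot v_j \mid 1 \leqslant j \leqslant r,\ g \in G \}$ generates $\Res(V)$ as a $\Cm$-module, and it is finite precisely because $G$ is finite. Hence $\Res(V)$ is finitely generated, and in fact a generating set of size $\ell$ for $V$ over $\CmG$ yields one of size at most $\ell\,|G|$ for $\Res(V)$ over $\Cm$.

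The only step requiring care is to pin down the composition convention in $\CmG$ so that the identity $(\alpha, g) = (\alpha, 1) \circ (e_{\bfn_j}, g)$ holds as written; everything else is formal.
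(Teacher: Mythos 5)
Your proposal is correct and follows the same route as the paper: the backward direction is immediate, and for the forward direction both you and the paper use the factorization $(\alpha,g)=(\alpha,1)\circ(e_{\bfn},g)$ to show that the finite set $\{(e_{\bfn_j},g)\cdot v_j \mid g\in G\}$ generates $\Res(V)$ over $\Cm$. No gaps.
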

\begin{proof}
    The if part is trivial. For the only if part, suppose that the $\CmG$-module $V$ has a finite set $X$ of generators. Then the set
    \[
        X' := \{(1,g) \cdot x \mid g \in G, x \in X\}
    \]
    is a finite set since $G$ is finite. Moreover, $X'$ is a set of generators of the $\Cm$-module $\Res(V)$ since any morphism $(\alpha,g)$ in the category $\CmG$ can be decomposed as $(\alpha,1) \circ (1,g)$. Therefore the $\Cm$-module $\Res(V)$ is finitely generated.
\end{proof}

As we mentioned before, the category $\CmG$ is locally Noetherian over $k$. Here we give a new proof using the functors $\Ind$ and $\Res$.

\begin{lemma} \label{lemma - locally_Noetherian_property_of_CmG}
    The category $\CmG$ is locally Noetherian over a commutative Noetherian ring. That is, any $\CmG$-submodule of finitely generated $\CmG$-module is still finitely generated.
\end{lemma}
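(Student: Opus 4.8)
The plan is to bootstrap from the locally Noetherian property of $\Cm$ to that of $\CmG$, using the exact functor $\Res$ together with the fact, recorded in Lemma~\ref{lemma - restriction_functor_preserves_f.g._modules}, that $\Res$ detects finite generation in both directions. Concretely, fix a commutative Noetherian coefficient ring $R$, let $V$ be a finitely generated $\CmG$-module, and let $W \subseteq V$ be a $\CmG$-submodule; we must show that $W$ is finitely generated.

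First I would apply $\Res$ to the inclusion $W \hookrightarrow V$. Since $\Res$ is exact, this yields an inclusion of $\Cm$-modules $\Res(W) \hookrightarrow \Res(V)$, and by Lemma~\ref{lemma - restriction_functor_preserves_f.g._modules} the module $\Res(V)$ is finitely generated over $\Cm$ because $V$ is finitely generated over $\CmG$. Now I invoke the locally Noetherian property of $\Cm = \C^{[m]}$ over $R$ --- this is the special case of the present lemma with $G$ trivial, which we take as the known input (for $\C = \FI$ it is the Noetherianity theorem of \cite{CEFN}, and the product category $\Cm$ is handled by the same circle of ideas). It follows that the submodule $\Res(W)$ of the finitely generated $\Cm$-module $\Res(V)$ is itself finitely generated. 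Finally, the other direction of Lemma~\ref{lemma - restriction_functor_preserves_f.g._modules} forces $W$ to be finitely generated over $\CmG$, which is what we wanted. Thus the whole argument amounts to sandwiching $W$ between the two halves of Lemma~\ref{lemma - restriction_functor_preserves_f.g._modules}, with Noetherianity of $\Cm$ doing the work in between.

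There is no genuine obstacle here; the few points that deserve a word are matters of bookkeeping. Lemma~\ref{lemma - restriction_functor_preserves_f.g._modules} and the exactness of $\Ind$ and $\Res$ were stated with coefficients in the field $k$, but their proofs are purely formal: the set $X' = \{(1,g)\cdot x \mid g \in G,\ x \in X\}$ used in the proof of Lemma~\ref{lemma - restriction_functor_preserves_f.g._modules} is finite for any coefficient ring, since $G$ is finite, and $\Ind$ and $\Res$ are manifestly exact over any base, so both statements transfer verbatim to the ring $R$. One should likewise make sure that the input cited for $\Cm$ is the version over $R$ rather than merely over $k$. Granting these routine checks, the argument above is immediate, and its only novelty (relative to citing the result directly) is that it reduces the finite-group-twisted category $\CmG$ to the plain category $\Cm$.
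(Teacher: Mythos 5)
Your proposal is correct and is essentially identical to the paper's own proof: restrict along $\Res$, invoke the Noetherianity of $\Cm$ (which the paper cites as \cite[Theorem 1.1]{LiYu}) on the submodule $\Res(W) \subseteq \Res(V)$, and then use Lemma \ref{lemma - restriction_functor_preserves_f.g._modules} in both directions to transfer finite generation back to $\CmG$. Your extra remarks about coefficient rings are reasonable bookkeeping but do not change the argument.
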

\begin{proof}

Suppose that $U$ is an $\CmG$-submodule of some finitely generated $\CmG$-module $V$. Since the functor $\Res$ is exact, $\Res(U)$ is a submodule of $\Res(V)$ as $\Cm$-module. By Lemma \ref{lemma - restriction_functor_preserves_f.g._modules}, $\Res(V)$ is finitely generated. By the Noetherian property of $\Cm$, see \cite[Theorem 1.1]{LiYu}, $\Res(U)$ is finitely generated. Again, by Lemma \ref{lemma - restriction_functor_preserves_f.g._modules}, the $\CmG$-module $U$ is finitely generated as desired.
\end{proof}

In classical group representation theory, it is well known that for a finite group inclusion $H \leqslant G$, any $H$-projective $kG$-module $V$ is a direct summand of the $kG$-module $V \downarrow_H^G \uparrow_H^G$ where $\downarrow_H^G$ is the restriction functor and $\uparrow_H^G$ is the induction functor; see \cite[Proposition 11.3.4]{ACourseInFiniteGrpRepTheory}. We have the following similar result.

\begin{lemma} \label{lemma - any_CmG-module_is_a_direct_summand_of_some_induced_module}
Let $V$ be an $\CmG$-module. Then $V$ is isomorphic to a direct summand of the $\CmG$-module $W = \Ind(\Res(V))$.
\end{lemma}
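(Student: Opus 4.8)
The plan is to realize $V$ as a split quotient of $W = \Ind(\Res(V))$ via the counit of the adjunction $(\Ind,\Res)$ established in Lemma \ref{lemma - adjuncation_of_Ind_and_Res}, combined with an averaging argument over $G$ (available since $\mathrm{char}\,k = 0$, so $|G|$ is invertible in $k$). This is the exact analogue of the classical statement about $V\downarrow_H^G\uparrow_H^G$ cited just before the lemma.

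First I would make the counit explicit. Since $\Res$ is right adjoint to $\Ind$, there is a counit $\varepsilon_V\colon W = \Ind(\Res(V))\to V$; unwinding the adjunction isomorphism $\theta$ of Lemma \ref{lemma - adjuncation_of_Ind_and_Res} (applied with $f=\mathrm{id}_{\Res(V)}$), it is the $\CmG$-module homomorphism given on $W(\bfn) = V(\bfn)\otimes_k kG$ by $\varepsilon_V(v\otimes_k g) = (e_\bfn, g)\cdot v$, where the right-hand side uses the $\CmG$-action on $V$ and $e_\bfn$ is the identity morphism on $\bfn$. That this is a morphism of $\CmG$-modules follows from $(\alpha,h)(e_\bfn,g) = (\alpha,hg)$ in $\CmG$ (it is also part of what is checked in Lemma \ref{lemma - adjuncation_of_Ind_and_Res}). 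Restricting $\varepsilon_V$ to the subspace $V(\bfn)\otimes_k 1_G$ shows it is already surjective, so $\varepsilon_V$ is an epimorphism.

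Next I would produce a section $\sigma_V\colon V\to W$ of $\varepsilon_V$ by the averaging formula
\[
\sigma_V(v) = \frac{1}{|G|}\sum_{g\in G}\big((e_\bfn, g^{-1})\cdot v\big)\otimes_k g, \qquad v\in V(\bfn).
\]
Two things need checking. First, $\varepsilon_V\circ\sigma_V = \mathrm{id}_V$: this is immediate, since $\varepsilon_V(\sigma_V(v)) = \frac{1}{|G|}\sum_{g}(e_\bfn,g)(e_\bfn,g^{-1})\cdot v = \frac{1}{|G|}\sum_{g} v = v$. Second, $\sigma_V$ is a morphism of $\CmG$-modules: given a morphism $(\alpha,h)\colon\bfn\to\bfn'$ one expands both $\sigma_V((\alpha,h)\cdot v)$ and $(\alpha,h)\cdot\sigma_V(v)$, and after the substitution $g\mapsto hg$ in the summation index the two expressions agree term by term. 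The point that makes this work is that the $\Cm$-part $(\alpha,1)$ and the group part $(e_\bfn,g)$ of a morphism in $\CmG$ commute (up to the obvious bookkeeping of source and target objects), so that conjugating $(\alpha,h)$ past $(e_\bfn,g^{-1})$ only permutes the summands. This verification is the sole computational step and the main—albeit minor—obstacle, since one must keep track of how the two factors of $\CmG$ interact across different objects.

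Finally, since $\varepsilon_V\colon W\to V$ is a split epimorphism of $\CmG$-modules with section $\sigma_V$, the module $V$ is isomorphic to a direct summand of $W = \Ind(\Res(V))$, which is the assertion of the lemma.
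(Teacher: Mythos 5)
Your proposal is correct and coincides with the paper's own argument: the section $\sigma_V$ is exactly the map $\varphi$ in the paper (the averaged formula $v\mapsto \frac{1}{|G|}\sum_{g}g^{-1}\cdot v\otimes_k g$), and $\varepsilon_V$ is exactly the paper's $\epsilon$ ($u\otimes_k g\mapsto g\cdot u$), with the identity $\epsilon\varphi=\mathrm{Id}_V$ and the equivariance checks carried out the same way. The only cosmetic difference is that you identify $\varepsilon_V$ as the counit of the adjunction of Lemma \ref{lemma - adjuncation_of_Ind_and_Res}, which the paper leaves implicit.
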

\begin{proof}
    For brevity, we write $g \cdot v$ for $(id_\bfn, g) \cdot v$ where $\bfn \in \obj(\CmG)$, $v \in V(\bfn)$, and $g \in G$. We prove by constructing a pair of $\CmG$-module homomorphisms $\varphi: V \to W$ and $\epsilon: W \to V$ such that $\epsilon \varphi = Id_V$. For an object $\bfn \in \obj(\CmG)$, the components of $\varphi$ and $\epsilon$ on $\bfn$ are given by
    \[
        \varphi_\bfn : V(\bfn) \to \Res(V)(\bfn) \otimes_k kG, \; v \mapsto \frac{1}{|G|} \sum_{g \in G} g^{-1} \cdot v \otimes_k g
    \]
    for $v \in V(\bfn)$ and
    \[
        \epsilon_\bfn : \Res(V)(\bfn) \otimes_k kG \to V(\bfn), \; u \otimes_k g \mapsto g \cdot u
    \]
    for $u \in \Res(V)(\bfn)$ and $g \in G$. It remains to check that $\epsilon_\bfn \varphi_\bfn = 1$ and that both $\varphi$ and $\epsilon$ are $\CmG$-module homomorphisms, which are routine.
\end{proof}

The functor $\Ind$ preserves injective modules. That is:

\begin{lemma} \label{lemma - Ind_preserves_injectives}
Suppose that $I$ is an injective $\Cm$-module. Then $\Ind(I) = I \boxtimes kG$ is an injective $\CmG$-module.
\end{lemma}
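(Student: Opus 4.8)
The plan is to show that $\Ind$ is not merely left adjoint to $\Res$ (Lemma \ref{lemma - adjuncation_of_Ind_and_Res}) but also a \emph{right} adjoint of $\Res$. Since $\Res$ is exact, a right adjoint of an exact functor preserves injective objects, and the statement follows at once by taking the injective $\Cm$-module $I$ as the argument.

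Concretely, for an $\Cm$-module $V$ and an $\CmG$-module $W$ I would construct a natural isomorphism
\[
\eta_{WV}\colon \Hom_{\CmG}(W,\Ind(V)) \xrightarrow{\ \sim\ } \Hom_{\Cm}(\Res(W),V).
\]
Given $\varphi\colon W\to\Ind(V)=V\boxtimes kG$, on each object $\bfn$ write $\varphi_\bfn(w)=\sum_{g\in G}\varphi_\bfn^g(w)\otimes_k g$ with the $\varphi_\bfn^g\colon W(\bfn)\to V(\bfn)$ uniquely determined, and set $\eta_{WV}(\varphi):=(\varphi_\bfn^{1_G})_\bfn$; conversely, for $f\colon\Res(W)\to V$ put $\eta_{WV}^{-1}(f)_\bfn(w):=\sum_{g\in G}f_\bfn\big((e_\bfn,g^{-1})\cdot w\big)\otimes_k g$. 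The remaining work is the routine verification that (i) $\eta_{WV}(\varphi)$ is genuinely $\Cm$-linear, (ii) $\eta_{WV}^{-1}(f)$ is genuinely $\CmG$-linear, and (iii) the two maps are mutually inverse and natural in both $W$ and $V$. The one point that must be used repeatedly, and which I regard as the heart of the bookkeeping (rather than a genuine difficulty), is that because $\CmG=\Cm\times G$ literally, the action of an $\Cm$-morphism $\alpha$ commutes with the action of every $(e_\bfn,g)$; comparing coefficients of $g$ in $\varphi_{\bfn'}(\alpha\cdot w)=(\alpha,1)\cdot\varphi_\bfn(w)$ then gives $\varphi^g(\alpha\cdot w)=\alpha\cdot\varphi^g(w)$ for all $g$, which yields the $\Cm$-equivariance claims, while $G$-equivariance of $\eta_{WV}^{-1}(f)$ comes from reindexing the sum over $G$.

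Once the adjunction $(\Res,\Ind)$ is in place, for an injective $\Cm$-module $I$ we have $\Hom_{\CmG}(-,\Ind(I))\cong\Hom_{\Cm}(\Res(-),I)$ as functors on $\CmG\Mod$, and the right-hand side is the composite of the exact functor $\Res$ with the exact functor $\Hom_{\Cm}(-,I)$, hence exact; therefore $\Ind(I)=I\boxtimes kG$ is injective, as desired. If one prefers to avoid setting up a second adjunction, the same result can be obtained directly from the Baer criterion: given an inclusion $U\hookrightarrow W$ in $\CmG\Mod$ and a $\CmG$-homomorphism $f\colon U\to\Ind(I)$, apply the exact functor $\Res$, extend $\Res(f)$ over $\Res(W)$ using that $\Res(\Ind(I))\cong I^{\oplus|G|}$ is injective in $\Cm\Mod$, and then average the resulting $\Cm$-homomorphism over $G$ (legitimate since $\operatorname{char}k=0$) to produce a $\CmG$-equivariant extension; the averaging leaves $f$ unchanged on $U$ because $f$ is already $G$-equivariant. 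Either way the only real content is the interplay, already exploited in Lemma \ref{lemma - any_CmG-module_is_a_direct_summand_of_some_induced_module}, between the $\Cm$-structure and the group action.
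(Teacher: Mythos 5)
Your proposal is correct, and its main route is genuinely different from the paper's. The paper argues at the level of $\Ext$: it embeds $\Ext_{\CmG}(V,\Ind(I))$ as a direct summand of $\Ext_{\CmG}(\Ind(\Res(V)),\Ind(I))$ using Lemma \ref{lemma - any_CmG-module_is_a_direct_summand_of_some_induced_module} (which is where $\operatorname{char}k=0$, or rather invertibility of $|G|$, enters), then applies Eckmann--Shapiro for the adjunction $(\Ind,\Res)$ together with the observation that $\Res(\Ind(I))\cong I^{\oplus|G|}$ is injective. You instead upgrade $\Ind$ to a \emph{right} adjoint of $\Res$ --- the Frobenius property of the finite group algebra $kG$, i.e.\ induction coincides with coinduction along $\Cm\hookrightarrow\CmG$ --- and then invoke the standard fact that a right adjoint of an exact functor preserves injectives. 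Your explicit formulas do work: the coefficient-of-$1_G$ map and $w\mapsto\sum_{g}f\bigl((e_\bfn,g^{-1})\cdot w\bigr)\otimes_k g$ are mutually inverse, and the $\Cm$- and $G$-equivariance checks go through exactly as you indicate because $(\alpha,1)$ and $(e_\bfn,g)$ commute in $\CmG=\Cm\times G$. What your approach buys is that it is characteristic-free and avoids the splitting lemma entirely; what the paper's approach buys is that it reuses machinery ($\Ind\dashv\Res$, the splitting lemma) already set up for other purposes and never needs a second adjunction. Two minor quibbles: your fallback argument is not really ``the Baer criterion'' (that term refers to testing against ideals/subfunctors of representables), just the lifting definition of injectivity plus averaging --- and that fallback, unlike your main route, does reintroduce the dependence on $\operatorname{char}k=0$, putting it much closer in spirit to the paper's proof.
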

\begin{proof}
    Let $V$ be any $\CmG$-module. We have that
    \begin{align*}
        \Ext_{\CmG}(V,\Ind(I)) &\subseteq \Ext_{\CmG}(\Ind(\Res(V)), \Ind(I)) \\
        &= \Ext_{\Cm}(\Res(V), \Res(I \boxtimes kG)) \\
        &\cong \Ext_{\Cm}(\Res(V), \bigoplus_{g\in G} I) \\
        &\cong \bigoplus_{g\in G} \Ext_{\Cm}(\Res(V), I) \\
        &= 0.
    \end{align*}
    where the first inclusion follows from Lemma \ref{lemma - any_CmG-module_is_a_direct_summand_of_some_induced_module} together with the fact that the functor $\Ext$ is additive and the first identity follows from the Eckmann Shapiro's Lemma. Therefore, the $\CmG$-module $\Ind(I)$ is injective.
\end{proof}

An immediate corollary is:

\begin{corollary} \label{corollary - module_in_U^S_is_injective}
Every module in the class $\mathcal U^S$ is a finitely generated injective $\CSG$-module.
\end{corollary}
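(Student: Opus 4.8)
The plan is to realize every member of $\mathcal U^S$ as the image under the induction functor of a finitely generated injective $\CS$-module, and then read off the conclusion from the two preceding lemmas together with the injectivity result for external tensor products established in \cite{Zeng}. First I would note that $\CS = \C^S$ is, up to relabeling the index set, a product category of exactly the same type as $\Cm$ with $|S|$ factors, and likewise $\CSG$ plays the role of $\CmG$. Consequently the embedding $\jmath_S \colon \CS \to \CSG$ gives rise to an adjoint pair $(\Ind_S, \Res_S)$ between $\CS\Mod$ and $\CSG\Mod$ to which Lemmas \ref{lemma - adjuncation_of_Ind_and_Res}, \ref{lemma - restriction_functor_preserves_f.g._modules} and \ref{lemma - Ind_preserves_injectives} apply verbatim; in particular $\Res_S\Ind_S(N) \cong \bigoplus_{g \in G} N$ for any $\CS$-module $N$.

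Now fix a module $M = (\boxtimes_{i \in S} I_i) \boxtimes kG$ in $\mathcal U^S$, where each $I_i$ is a finitely generated injective $\C$-module, and set $N := \boxtimes_{i \in S} I_i$. An external tensor product of finitely many finitely generated modules is finitely generated, so $N$ is a finitely generated $\CS$-module; and $N$ is injective as a $\CS$-module by \cite{Zeng}, where it is shown that the external tensor product of finitely generated injective $\C$-modules is injective over the corresponding product category. Since $M = N \boxtimes kG \cong \Ind_S(N)$, Lemma \ref{lemma - Ind_preserves_injectives} (applied with $\CS$ in place of $\Cm$) shows that $M$ is an injective $\CSG$-module. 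For finite generation, observe that $\Res_S(M) \cong \bigoplus_{g \in G} N$ is a finite direct sum of finitely generated $\CS$-modules, hence finitely generated, so $M = \Ind_S(N)$ is finitely generated over $\CSG$ by Lemma \ref{lemma - restriction_functor_preserves_f.g._modules}. This proves that every module in $\mathcal U^S$ is a finitely generated injective $\CSG$-module.

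This is a direct corollary rather than a result requiring a genuinely new idea, so there is no serious obstacle. The only points that need care are bookkeeping ones: transferring the statements of Lemmas \ref{lemma - restriction_functor_preserves_f.g._modules} and \ref{lemma - Ind_preserves_injectives} from the ambient category $\Cm$ to the subproduct $\CS$ (and correspondingly from $\CmG$ to $\CSG$), and the appeal to \cite{Zeng} for the injectivity of the external tensor product $\boxtimes_{i\in S} I_i$, which is where the real content lies. If one prefers to avoid citing \cite{Zeng} as a black box, that injectivity itself can be re-derived inductively from the one-factor case using the coinduction/shift machinery, but for the purposes of this corollary invoking it directly is cleanest.
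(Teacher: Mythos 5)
Your proposal is correct and follows exactly the paper's own route: the paper's proof is the one-line observation that the claim follows from Lemma \ref{lemma - Ind_preserves_injectives} together with \cite[Theorem 1.1]{Zeng}, which is precisely your argument. Your additional bookkeeping (transferring the lemmas from $\Cm$ to $\CS$ and verifying finite generation via $\Res_S$) just makes explicit what the paper leaves implicit.
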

\begin{proof}
    This follows immediately by Lemma \ref{lemma - Ind_preserves_injectives} and \cite[Theorem 1.1]{Zeng}.
\end{proof}

In the following lemma we prove the main result of this section for the special case that $m = 1$.

\begin{lemma} \label{lemma - classification_of_inejctive_modules_of_1-rank}
Every finitely generated $\C_G$-module $V$ is finitely cogenerated by $\U^1$.
\end{lemma}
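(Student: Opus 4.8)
The plan is to reduce the statement about $\C_G = \FI \times G$-modules to the known classification of finitely generated injective $\C = \FI$-modules, via the adjoint pair $(\Ind, \Res)$. Given a finitely generated $\C_G$-module $V$, I would first apply $\Res$ to obtain the $\C$-module $\Res(V)$, which is finitely generated by Lemma \ref{lemma - restriction_functor_preserves_f.g._modules}. By \cite{SamSnowden} (or \cite{GanLi}), $\Res(V)$ embeds into a finite direct sum $\bigoplus_{j} I_j$ of finitely generated indecomposable injective $\C$-modules; equivalently, $\Res(V)$ is finitely cogenerated by the class of finitely generated injective $\C$-modules. This is the base case on which everything rests, and it is exactly the $m = 1$ instance of the classification over the trivial group.

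Next I would transport this embedding through $\Ind$. Since $\Res$ is exact, an injection $\Res(V) \hookrightarrow \bigoplus_j I_j$ of $\C$-modules yields, after applying the exact functor $\Ind$, an injection $\Ind(\Res(V)) \hookrightarrow \Ind(\bigoplus_j I_j) \cong \bigoplus_j \Ind(I_j) = \bigoplus_j (I_j \boxtimes kG)$ of $\C_G$-modules. By Lemma \ref{lemma - Ind_preserves_injectives} each $I_j \boxtimes kG$ is injective, and it lies in the class $\U^1 = \{ I \boxtimes kG \mid I \in \C\inj \}$ (here $S = \{1\}$, so $\boxtimes_{i \in S} I_i$ is just a single $I$). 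Thus $\Ind(\Res(V))$ is finitely cogenerated by $\U^1$. Finally, Lemma \ref{lemma - any_CmG-module_is_a_direct_summand_of_some_induced_module} tells us that $V$ is a direct summand of $\Ind(\Res(V))$, hence in particular embeds into it; composing this embedding with the embedding $\Ind(\Res(V)) \hookrightarrow \bigoplus_j (I_j \boxtimes kG)$ exhibits $V$ as finitely cogenerated by $\U^1$, as desired.

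There is no serious obstacle here: the argument is a clean concatenation of the results already assembled in this section, and the only external input is the known $\FI$ classification of \cite{SamSnowden}. The one point worth stating carefully is the interaction $\Ind(\bigoplus_j I_j) \cong \bigoplus_j \Ind(I_j)$, which holds because $\Ind(-) = (-) \boxtimes kG \cong k\C_G \otimes_{k\C} -$ is a left adjoint and hence commutes with finite (indeed arbitrary) direct sums. I would also remark explicitly that being a direct summand implies being a submodule, so that "finitely cogenerated" is inherited; this is the step that uses Lemma \ref{lemma - any_CmG-module_is_a_direct_summand_of_some_induced_module} rather than, say, a projectivity argument. With $m = 1$ settled, the general case in the subsequent development will proceed by induction on $m$ using the slice/$S$-homology machinery built up in Sections 2 and 3.
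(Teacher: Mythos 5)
Your proof is correct, but it follows a genuinely different route from the paper's. You reduce the general-$G$ statement to the trivial-group case: you import wholesale the known fact (from \cite{SamSnowden} or \cite{GanLi}) that every finitely generated $\C$-module embeds into a finite direct sum of finitely generated injective $\C$-modules, and then transport that embedding along $\Ind(\Res(-))$, using Lemma \ref{lemma - any_CmG-module_is_a_direct_summand_of_some_induced_module} to split $V$ off from $\Ind(\Res(V))$. The paper instead re-derives the statement directly for $\C_G$ with the machinery of Sections 2--3: it takes the short exact sequence $0 \to V_T \to V \to V_F \to 0$, observes that $V_T$ is finite dimensional (by local Noetherianity) and hence embeds into a finite-dimensional injective lying in $\U^1$, embeds the torsion-free part $V_F$ into a finitely generated semi-induced module via Corollary \ref{proposition - S-torsion_free_module_can_be_embedded_into_S-relative_projective_module} (such a module is projective in characteristic $0$, hence a summand of free modules $M(n) \boxtimes kG \in \U^1$), and then assembles the two embeddings using the injectivity of the first cogenerator. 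Your version is shorter but leans on the full strength of the $\FI$ classification as a black box; the paper's version uses only the weaker external input that finitely generated projective $\C$-modules are injective, and its torsion-filtration shape mirrors the inductive step of Theorem \ref{theorem - finitely_generated_module_is_finitely_cogenerated_by_the_class_U^m}, keeping the treatment uniform. One cosmetic slip: the exactness needed to transport the embedding $\Res(V) \hookrightarrow \bigoplus_j I_j$ is that of $\Ind$, not of $\Res$; you do invoke the correct functor in the same sentence, so nothing is lost.
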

\begin{proof}
    There is a short exact sequence
    \[
        0 \to V_T \to V \to V_F \to 0
    \]
    where $V_T$ is the torsion part and $V_F$ is the torsion free part of $V$. By Lemma \ref{lemma - locally_Noetherian_property_of_CmG}, the torsion part $V_T$ is finitely generated, and hence finite dimensional. Therefore $V_T$ can be embedded into a finite dimensional injective $\C_G$-modules which lies in $\U^1$. The module $V_F$ is $[1]$-torsion free, so by Proposition \ref{proposition - S-torsion_free_module_can_be_embedded_into_S-relative_projective_module} it can be embedded into some finitely generated $[1]$-semi-induced $\C_G$-module which is projective by the comment following Definition \ref{definition - S-relative projective}. But any finitely generated projective $\C_G$-module is a direct summand of a finite direct sum of $\C_G$-modules of the form $M(n) \boxtimes kG$ which lies in $\U^1$. The conclusion then follows.
\end{proof}

\begin{lemma} \label{lemma - extension_of_scalar_on_certain_yeilds_neat_external_tensor_product}
    Notation as before. Let $\bfs$ be an object in $\CS$ and $W$ an $\Rs$-module. If $W \cong W' \boxtimes k\Aut(\bfs)$ for some $\CTG$-module $W'$, then $\Fs(W) \cong M(\bfs) \boxtimes W'$ as $\CmG$-modules. In other words, there is an isomorphism of $\CmG$-modules
    \[
        \theta: \overline M(\bfs) \otimes_{k (\mathscr{F}^{\neg S} \times G \times \mathrm{Aut}(\bfs))} W \cong M(\bfs) \boxtimes W'
    \]
    where $M(\bfs)$ is a free $\CS$-module and $\overline M(\bfs) = k\CT \boxtimes kG \boxtimes M(\bfs)$ is an $(\CmG,\Rs)$-bimodule.
\end{lemma}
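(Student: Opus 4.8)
The plan is to unwind the definition $\Fs(W)=\overline M(\bfs)\otimes_{k\Rs}W$ and exploit the fact that the category algebra of a product category is a tensor product, so that a balanced tensor product over $k\Rs$ factors as a ``Fubini'' product of a balanced tensor product over $k\Aut(\bfs)$ and one over $k\CTG$. First I would fix the ring-theoretic picture. Since $\Cm=\CS\times\CT$ and $\CmG=\Cm\times G$, there are canonical isomorphisms of (non-unital) category algebras $k\CmG\cong k\CS\otimes_k k\CTG$ and $k\Rs\cong k\Aut(\bfs)\otimes_k k\CTG$, and under them the inclusion $\Rs\hookrightarrow\CmG$ is $(\Aut(\bfs)\hookrightarrow\CS)\times\mathrm{id}_{\CTG}$; moreover, for an $\CS$-module (resp.\ an $\Aut(\bfs)$-module) $A$ and a $\CTG$-module $B$, the external product $A\boxtimes B$ of \cite[Definition 6.4]{Gadish} is precisely the associated $k\CS\otimes_k k\CTG$-module (resp.\ $k\Aut(\bfs)\otimes_k k\CTG$-module), and likewise for bimodules; in particular $\overline M(\bfs)=M(\bfs)\boxtimes k\CTG$ is the external product of the $(k\CS,k\Aut(\bfs))$-bimodule $M(\bfs)$ with the $(k\CTG,k\CTG)$-bimodule $k\CTG$.

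The main lemma I need is a distributivity isomorphism: for a $(k\CS,k\Aut(\bfs))$-bimodule $P_1$, a $(k\CTG,k\CTG)$-bimodule $P_2$, a left $k\Aut(\bfs)$-module $V_1$ and a left $k\CTG$-module $V_2$, there is a natural isomorphism of $k\CmG$-modules
\[
(P_1\boxtimes P_2)\otimes_{k\Aut(\bfs)\otimes_k k\CTG}(V_1\boxtimes V_2)\;\cong\;(P_1\otimes_{k\Aut(\bfs)}V_1)\boxtimes(P_2\otimes_{k\CTG}V_2)
\]
carrying $(p_1\boxtimes p_2)\otimes(v_1\boxtimes v_2)$ to $(p_1\otimes v_1)\boxtimes(p_2\otimes v_2)$. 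One checks this is well defined, bijective, and $\CmG$-linear by evaluating both sides on an object $\bfu\times\bft$ and reducing to associativity and commutativity of $\otimes_k$ together with its distributivity over direct sums. The one technical caveat is that $\CS$, $\CTG$, $\CmG$ have infinitely many objects, so their category algebras are non-unital; one therefore works throughout with non-degenerate modules (equivalently, with functors), for which $R\otimes_R M\cong M$ continues to hold. Applying this with $P_1=M(\bfs)$ (with its right $k\Aut(\bfs)$-action), $P_2=k\CTG$ as a bimodule over itself, $V_1=k\Aut(\bfs)$ the left regular module and $V_2=W'$, and using $W\cong W'\boxtimes k\Aut(\bfs)\cong k\Aut(\bfs)\boxtimes W'$ as $\Rs$-modules, yields
\[
\Fs(W)=\overline M(\bfs)\otimes_{k\Rs}W\;\cong\;\bigl(M(\bfs)\otimes_{k\Aut(\bfs)}k\Aut(\bfs)\bigr)\boxtimes\bigl(k\CTG\otimes_{k\CTG}W'\bigr).
\]

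To finish, $M(\bfs)\otimes_{k\Aut(\bfs)}k\Aut(\bfs)\cong M(\bfs)$ because $k\Aut(\bfs)$ is a genuine unital ring and this is tensoring with the regular module, while $k\CTG\otimes_{k\CTG}W'\cong W'$ by non-degeneracy of $W'$; hence $\Fs(W)\cong M(\bfs)\boxtimes W'$ as $\CmG$-modules, with $M(\bfs)$ free as an $\CS$-module as asserted, and tracking the chain of identifications gives the isomorphism $\theta$ explicitly, which makes its $\CmG$-linearity and naturality transparent. I expect the only real obstacle to be bookkeeping — keeping straight which of the two factors acts on which tensorand and on which side, and making the distributivity isomorphism genuinely rigorous over non-unital category algebras; the finite group $G$ contributes nothing beyond notation, since it sits inside $\CTG$ on both sides of $\otimes_{k\Rs}$ and simply passes through via $k\CTG\otimes_{k\CTG}W'\cong W'$. (Alternatively, one can avoid explicit tensor manipulations altogether: $\Fs$ is left adjoint to the slice functor $N\mapsto N[[\bfs]]$, so one computes $\Hom_{\CmG}(\Fs(W),-)$ and $\Hom_{\CmG}(M(\bfs)\boxtimes W',-)$, identifies each with $\Hom_{\CTG}(W',(-)[[\bfs]])$ via the freeness of $k\Aut(\bfs)$ and of $M(\bfs)$, and invokes Yoneda.)
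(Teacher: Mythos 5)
Your argument is correct, and it reaches the same isomorphism as the paper but by a more structural route. The paper's proof simply writes down $\theta_{\bfn}$ on elements, $(\alpha\otimes g\otimes\beta)\otimes(w'\otimes\sigma)\mapsto(\alpha,g)\cdot w'\otimes\beta\sigma$, normalizes every tensor to a representative of the form $(\mathrm{id}_{\bfn_T}\otimes 1_G\otimes\beta)\otimes(w'\otimes \mathrm{id}_{\bfs})$ by pushing the $\CTG$-part and the $\Aut(\bfs)$-part across the balanced tensor sign, and then observes bijectivity and $\CmG$-linearity directly; that normalization is exactly the content of your ``Fubini'' distributivity isomorphism, so the underlying computation is identical. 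What your packaging buys is a clean separation of concerns: the general interchange lemma $(P_1\boxtimes P_2)\otimes_{A\otimes_k B}(V_1\boxtimes V_2)\cong(P_1\otimes_A V_1)\boxtimes(P_2\otimes_B V_2)$ plus $R\otimes_R M\cong M$ does all the work, and you correctly flag the one genuine subtlety that the paper glosses over, namely that $k\CTG$ and $k\CmG$ are non-unital, so one must work with non-degenerate modules (category algebras of small categories have enough idempotents, and functor-modules are automatically non-degenerate, so the inverse of the interchange map is well defined). What the paper's version buys is an explicit formula for $\theta$ that is reused implicitly when the lemma is applied in Theorem \ref{theorem - finitely_generated_module_is_finitely_cogenerated_by_the_class_U^m}. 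Your parenthetical Yoneda alternative would also work, but as stated it needs care: $\Hom_{\CmG}(\overline M(\bfs),N)$ is a product, not a direct sum, over objects of $\CT$, so the identification of both representing functors with $\Hom_{\CTG}(W',(-)[[\bfs]])$ requires the finiteness of $\Aut(\bfs)$ and a little bookkeeping; since it is offered only as an aside, this does not affect the correctness of your main argument.
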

\begin{proof}
    For an object $\bfn = \bfn_S \times \bfn_T \in \obj(\Cm)$ where $\bfn_S \in \obj(\CS)$ with $\bfn_S \succcurlyeq \bfs$ and $\bfn_T \in \obj(\CT)$, the map $\theta_\bfn$, which is the component of $\theta$ on $\bfn$, is given by
    \[
        \theta_\bfn: (\alpha \otimes_k g \otimes_k \beta) \otimes_{k (\mathscr{F}^{\neg S} \times G \times \mathrm{Aut}(\bfs))} (w' \otimes_k \sigma) \mapsto (\alpha,g) \cdot w' \otimes_k \beta \sigma,
    \]
    where $\alpha$ is a morphism in $\mathscr{F}^{\neg S}$ with codomain $\bfn_T$, $g \in G$, $\beta \in \CS(\bfs,\bfn_S)$, $w' \in W'$, and $\sigma \in \Aut(\bfs)$. Since
    \[
        (\alpha \otimes_k g \otimes_k \beta) \otimes_{k (\mathscr{F}^{\neg S} \times G \times \mathrm{Aut}(\bfs))} (w' \otimes_k \sigma) = (id_{\bfn_T} \otimes_k 1_G \otimes_k \beta\sigma) \otimes_{k (\mathscr{F}^{\neg S} \times G \times \mathrm{Aut}(\bfs))} ((\alpha,g) \cdot w' \otimes_k id_\bfs)
    \]
    with $\beta\sigma \in \CS(\bfs,\bfn_S)$ and $(\alpha,g) \cdot w' \in W'(\bfn_T)$, the map $\theta_\bfn$ can be simplified as
    \[
        \theta_\bfn: (id_{\bfn_T} \otimes_k 1_G \otimes_k \beta) \otimes_{k (\mathscr{F}^{\neg S} \times G \times \mathrm{Aut}(\bfs))} (w' \otimes_k id_\bfs) \mapsto \beta \otimes_k w'
    \]
    for $\beta \in \CS(\bfs,\bfn_S)$ and $w' \in W'(\bfn_T)$.
    The map $\theta_\bfn$ is easily seen to be bijective. It is routine to check that $\theta$ is an $\CmG$-module homomorphism.
\end{proof}

The following result shows that the category $\CmG \module$ has enough injectives, and furthermore gives a classification of finitely generated injective $\CmG$-modules.

\begin{theorem} \label{theorem - finitely_generated_module_is_finitely_cogenerated_by_the_class_U^m}
Any finitely generated $\CmG$-module is finitely cogenerated by $\U^m_G$.
\end{theorem}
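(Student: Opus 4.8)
The plan is to prove the result by induction on $m$, keeping $G$ an arbitrary finite group throughout, so that the inductive hypothesis may be applied to categories $\mathscr{F}^{m-1}_{G'}$ for various finite groups $G'$. The base case $m=1$ is exactly Lemma~\ref{lemma - classification_of_inejctive_modules_of_1-rank}. Before running the induction I would record one elementary closure property: for fixed $m$ and $G$, the class of finitely generated $\CmG$-modules which are finitely cogenerated by $\U^m_G$ is closed under submodules, under finite direct sums, and under extensions. Only the last is not immediate, and it follows from Corollary~\ref{corollary - module_in_U^S_is_injective}, which says every module in $\U^m_G$ is injective: given a short exact sequence $0 \to V' \to V \to V'' \to 0$ together with embeddings $V' \hookrightarrow I'$ and $V'' \hookrightarrow I''$ into finite direct sums of members of $\U^m_G$, injectivity of $I'$ extends $V' \hookrightarrow I'$ to a map $V \to I'$, and pairing it with $V \twoheadrightarrow V'' \hookrightarrow I''$ produces an embedding $V \hookrightarrow I' \oplus I''$.

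For the inductive step I would decompose a finitely generated $\CmG$-module $V$ along the torsion filtration $V = V^{[0]}_{tor} \supseteq V^{[1]}_{tor} \supseteq \cdots \supseteq V^{[m]}_{tor} \supseteq 0$, whose successive quotients $V^{[i-1]}_{tor}/V^{[i]}_{tor}$ are finitely generated and $\{i\}$-torsion free for $1 \leqslant i \leqslant m$, and whose bottom term $V^{[m]}_{tor}$ is finite dimensional. By the closure property this reduces the problem to two cases: (a) finitely generated $\{i\}$-torsion free modules, and (b) finite dimensional modules. Case (b) I would handle directly: a finite dimensional module has a finite composition series, so it suffices to embed each simple $\CmG$-module into a finite direct sum of members of $\U^m_G$; but a simple $\CmG$-module is supported at a single object, and over a field of characteristic zero it is therefore isomorphic to $(\boxtimes_{i \in [m]} L_i) \boxtimes \lambda$ with each $L_i$ a finite dimensional simple $\C$-module and $\lambda$ a simple $kG$-module. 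Applying Lemma~\ref{lemma - classification_of_inejctive_modules_of_1-rank} over the trivial group embeds each $L_i$ into a finitely generated injective $\C$-module $I_i$, while $\lambda \hookrightarrow kG$; since $\boxtimes$ is exact in each variable, these combine into an embedding into $(\boxtimes_i I_i) \boxtimes kG \in \U^m_G$, and case (b) follows from the closure property.

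For case (a), fix $S = \{i\}$, so that $\neg S = [m]\setminus\{i\}$, $\CS \cong \C$, and $\CTG = \mathscr{F}^{[m]\setminus\{i\}}_G$; let $V$ be finitely generated and $\{i\}$-torsion free. By Corollary~\ref{proposition - S-torsion_free_module_can_be_embedded_into_S-relative_projective_module}, $V$ embeds into a finitely generated $\{i\}$-semi-induced $\CmG$-module $W$, which carries a finite filtration whose factors are $\{i\}$-induced modules, each isomorphic to $\Fs(U)$ for some object $\bfs$ of $\CS$ and some finitely generated $\Rs$-module $U$, where $\Rs = \Aut(\bfs) \times \CTG$. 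The decisive point is the identification $\Rs \cong \mathscr{F}^{[m]\setminus\{i\}}_{G \times \Aut(\bfs)}$, which puts $U$ in the scope of the induction hypothesis, applied with $m-1$ factors and the group $G \times \Aut(\bfs)$: thus $U$ embeds into a finite direct sum of modules of the form $(\boxtimes_{k \neq i} I_k) \boxtimes k(G \times \Aut(\bfs))$ with each $I_k \in \C\inj$. Since $k(G \times \Aut(\bfs)) \cong kG \otimes_k k\Aut(\bfs)$, each such module has the shape $W' \boxtimes k\Aut(\bfs)$ with $W' = (\boxtimes_{k \neq i} I_k) \boxtimes kG$ a $\CTG$-module, so Lemma~\ref{lemma - extension_of_scalar_on_certain_yeilds_neat_external_tensor_product} gives $\Fs(W' \boxtimes k\Aut(\bfs)) \cong M(\bfs) \boxtimes W'$. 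As $M(\bfs)$ is a finitely generated projective, hence injective (\cite{Zeng}), $\CS$-module, regrouping the external factors so that $M(\bfs)$ occupies the $i$-th coordinate exhibits $M(\bfs) \boxtimes W'$ as a member of $\U^m_G$. Applying the exact functor $\Fs$, which preserves direct sums, to the embedding of $U$ then shows each factor $\Fs(U)$ of $W$ is finitely cogenerated by $\U^m_G$; by the closure property so is $W$, and hence so is $V$. This closes the induction.

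The step I expect to demand the most care is case (a): tracking the external-tensor factors so that Lemma~\ref{lemma - extension_of_scalar_on_certain_yeilds_neat_external_tensor_product} applies and genuinely returns a module of the form $(\boxtimes_j I_j) \boxtimes kG$, and checking that the $\{i\}$-induced factors of an $\{i\}$-semi-induced module can indeed be written as $\Fs(U)$ with $U$ finitely generated (which rests on $\Rs$ being locally Noetherian). Case (b) should be routine once the simple $\CmG$-modules over a field of characteristic zero are identified as external products of simple $\C$-modules with a simple $kG$-module, together with the observation — essentially a byproduct of the case $m=1$ — that finite dimensional $\C$-modules embed into finite dimensional injective $\C$-modules.
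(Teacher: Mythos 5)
Your proposal is correct and follows essentially the same route as the paper's proof: induction on $m$ with the group $G$ carried along, reduction via the torsion filtration $V^{[i]}_{tor}$ to the finite dimensional bottom term and to $\{i\}$-torsion free quotients, embedding the latter into semi-induced modules via Corollary \ref{proposition - S-torsion_free_module_can_be_embedded_into_S-relative_projective_module}, and applying the inductive hypothesis over $\mathscr{R}_{\mathbf{s}} \cong \mathscr{F}^{\neg S}_{G \times \Aut(\mathbf{s})}$ together with Lemma \ref{lemma - extension_of_scalar_on_certain_yeilds_neat_external_tensor_product}. Your explicit closure-under-extensions observation and the composition-series treatment of the finite dimensional case merely spell out what the paper compresses into the horseshoe Lemma and a one-line assertion.
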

\begin{proof}
We prove by an induction on $m$. By Lemma \ref{lemma - classification_of_inejctive_modules_of_1-rank}, the conclusion holds for $m = 1$. Suppose that the conclusion holds for all $n$ with $1 \leqslant n < m$, and let $V$ be a finitely generated $\CmG$-module. Then $V$ admits a finite filtration
\[
0 \subseteq V^{[m]}_{tor} \subseteq \ldots \subseteq V^{[i]}_{tor} \subseteq \ldots \subseteq V^{[1]}_{tor} \subseteq  V.
\]
By Corollary \ref{corollary - module_in_U^S_is_injective} and the horseshoe Lemma, it suffices to show the conclusion for each quotient $V^{[i-1]}_{tor} / V^{[i]}_{tor}$ and $V^{[m]}_{tor}$. But $V^{[m]}_{tor}$ is finite dimensional, so can be embedded into a certain finite dimensional injective module, which lies in $\U^m_G$. Therefore, we only need to show the conclusion for the quotient modules $V^{[i-1]}_{tor} / V^{[i]}_{tor}$, which is $\{i\}$-torsion free. Instead, we prove a stronger result; that is, we prove the conclusion for any nonempty subset $S$ of $[m]$ and any \Stf~$\CmG$-modules. By Corollary \ref{proposition - S-torsion_free_module_can_be_embedded_into_S-relative_projective_module}, $S$-torsion free modules can be embedded into $S$-semi-induced modules, so it suffices to show that $S$-semi-induced modules is finitely cogenerated by $\U^m_G$. By Definition \ref{definition - S-relative projective}, it turns out to show the conclusion for \Sinduced~$\CmG$-modules.

Let $V$ be a finitely generated \Sinduced~$\CmG$-module. Then it is isomorphic to $\Fs(W)$ for some finitely generated $\Rs$-module $W$ and $\bfs \in \obj(\CS)$. Put $G' = G \times \Aut(\bfs)$. Note that $\Rs = \CT \times G'$ and $|\neg S| < m$. Then by the induction hypothesis, the module $W$ is finitely cogenerated by $\U^{\neg S}_{G'}$. Without loss of generality, assume that the $\Rs$-module $W$ can be embedded as below
\[
0 \to W \to E \boxtimes k\Aut(\bfs)
\]
where $E \in \U^{\neg S}_G$. By the exactness of $\Fs$, the \Sinduced~module $V \cong \Fs(W)$ can be embedded into $\Fs( E \boxtimes k\Aut(\bfs) )$. By Lemma \ref{lemma - extension_of_scalar_on_certain_yeilds_neat_external_tensor_product}, $\Fs( E \boxtimes k\Aut(\bfs) ) \cong M(\bfs) \boxtimes E$ which lies in $\U^m_G$. This finishes the proof.
\end{proof}

From now on we focus on the category $\Cm$. By the above theorem, the class $\U^m_1$ of injective $\Cm$-modules finitely cogenerates the category $\Cm \module$, where $1$ is the trivial group. Consequently, any finitely generated injective $\Cm$-module is a direct summand of a finite direct sum of modules in $\U^m_1$. In the rest of this paper we give an explicit description of indecomposable injective $\Cm$-modules. It turns out that they coincide with external tensor products of indecomposable injective $\C$-modules, which are either indecomposable projective $\C$-modules or finite dimensional indecomposable injective $\C$-modules by \cite{SamSnowden} or \cite{GanLi}.

\begin{lemma} \label{lemma - externel_tensor_of_indinjs_is_indinj_having_local_end_ring}
Let $I_1, \ldots, I_m$ be indecomposable injective $\C$-module. Then $I_1  \boxtimes \ldots \boxtimes I_m$ is an indecomposable injective $\Cm$-module and admits a local endomorphism ring.
\end{lemma}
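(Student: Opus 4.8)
The plan is to prove the three assertions---injectivity, indecomposability, and locality of the endomorphism ring---essentially in that order, reducing each to a known fact about $\C$-modules via the external tensor product. We already showed injectivity of $I_1 \boxtimes \ldots \boxtimes I_m$ in \cite[Theorem 1.1]{Zeng} (or it follows from Corollary \ref{corollary - module_in_U^S_is_injective} with $G$ trivial), so the real content is the endomorphism ring computation. The key step is to establish that
\[
\End_{\Cm}(I_1 \boxtimes \ldots \boxtimes I_m) \cong \End_{\C}(I_1) \otimes_k \ldots \otimes_k \End_{\C}(I_m)
\]
as $k$-algebras. Granting this, local-ness follows: each $I_i$ is indecomposable injective, and since $\C$ is locally Noetherian over the field $k$ and each $I_i$ is finitely generated, $\End_{\C}(I_i)$ is a finite-dimensional local $k$-algebra (its unique maximal ideal being the radical, with $\End_{\C}(I_i)/\mathrm{rad} \cong k$ because $k$ is algebraically closed of characteristic zero---or more carefully, because indecomposable injectives here have the lifting property making $\End$ local); a tensor product over $k$ of finite-dimensional local $k$-algebras with residue field $k$ is again local, and a module with local endomorphism ring is indecomposable. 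This simultaneously yields indecomposability, so the three claims collapse to one.

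To prove the $\End$ isomorphism, I would first treat the finite-dimensional case directly and then reduce the general case to it. If every $I_i$ is finite-dimensional, then $I_1 \boxtimes \ldots \boxtimes I_m$ is a finite-dimensional $\Cm$-module supported on finitely many objects, and a direct $\Hom$-space computation over each object, using $\Hom_k(A_1 \otimes \ldots \otimes A_m, B_1 \otimes \ldots \otimes B_m) \cong \bigotimes_i \Hom_k(A_i, B_i)$ together with the interleaving of the morphism actions of the $m$ factor categories, gives the claim; one checks the multiplication matches. For the general case, I would use that an indecomposable injective $\C$-module is, by \cite{SamSnowden} (or \cite{GanLi}), either finite-dimensional or an indecomposable projective $M(\ell)$ (the $k$-linearization of a representable), and that external tensor product is compatible with this dichotomy. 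When some factors are projective representables $M(\ell_i)$, one has $\Hom_{\C}(M(\ell_i), -) \cong \mathrm{ev}_{\ell_i}$, and more generally $\Hom_{\Cm}(M(\ell_1)\boxtimes \cdots \boxtimes M(\ell_r) \boxtimes I_{r+1} \boxtimes \cdots, N)$ can be computed by evaluating $N$ along the projective directions and then invoking the finite-dimensional computation (or the one-variable injectivity statements) in the remaining directions. Carefully bookkeeping the adjunctions $\Hom_{\Cm}(V_1 \boxtimes V_2, N) \cong \Hom_{\CS}(V_1, \underline{\Hom}_{\CT}(V_2, N))$ for the partition of $[m]$ into projective vs. finite-dimensional factors reduces everything to $m=1$, where the statement is classical.

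An alternative, cleaner route for the $\End$ computation avoids the case analysis: use that for finitely generated modules over the locally Noetherian category $\C$ (resp. $\Cm$) the functor $\boxtimes$ satisfies a Künneth-type formula for $\Hom$, i.e. the natural map $\bigotimes_i \Hom_{\C}(U_i, V_i) \to \Hom_{\Cm}(\boxtimes_i U_i, \boxtimes_i V_i)$ is an isomorphism whenever each $U_i$ is finitely generated and each $V_i$ is, say, injective (so that the relevant higher $\Ext$ or $\mathrm{Tor}$ obstructions vanish, or simply because injectives are products of the explicit models). Taking $U_i = V_i = I_i$ yields the ring isomorphism, with multiplicativity being immediate from naturality. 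I expect the main obstacle to be precisely the justification of this Künneth isomorphism for $\Hom$ in sufficient generality: one must argue that no correction terms appear when the $I_i$ are infinite-dimensional (projective) injectives, which I would handle by reducing along the projective directions via the evaluation adjunction as sketched above, since projective representables make those $\Hom$'s exact and finitely supported. Once that isomorphism is in hand, locality of the endomorphism ring---and hence indecomposability---is a short algebraic consequence, and the statement of the lemma follows.
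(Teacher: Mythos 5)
Your overall strategy -- compute $\End_{\Cm}(I_1\boxtimes\ldots\boxtimes I_m)$ as $\bigotimes_i\End_{\C}(I_i)$, show that ring is local, and deduce indecomposability -- is a genuinely different route from the paper's. The paper instead argues by cases via the Sam--Snowden classification: when all $I_i$ are projective (resp.\ all finite dimensional) the external product is an indecomposable projective over $\Cm$ (resp.\ its dual is one over the opposite category), and in the mixed case it uses the universal property of \Sinduced\ modules to identify $\End_{\Cm}(I_1\boxtimes I_2)$ with $\End_{\Rs}(U\boxtimes I_2)$ for an irreducible $\Aut(\bfs)$-module $U$, then invokes the classical fact that an indecomposable injective has local endomorphism ring (\cite[Lemma 25.4]{Anderson}). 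Your plan, if completed, would give the sharper conclusion $\End_{\Cm}(\boxtimes_i I_i)\cong k$ and avoid the case analysis; your fallback of ``evaluating along the projective directions'' is essentially the paper's use of the universal property of induced modules.

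However, as written there are two genuine gaps. First, the K\"unneth-type isomorphism $\bigotimes_i\Hom_{\C}(U_i,V_i)\cong\Hom_{\Cm}(\boxtimes_i U_i,\boxtimes_i V_i)$ is the entire technical content of your argument and is only sketched; you yourself flag it as the main obstacle. It is true here (the categories are locally Noetherian over a field, so all modules involved are finitely presented, and one can compare finite free presentations, the formula being obvious for $M(\bfn)=\boxtimes_i M([n_i])$ since $\Hom_{\Cm}(M(\bfn),V)=V(\bfn)$), but this argument must actually be carried out; without it the proof is a plan, not a proof. Second, and more seriously, your deduction of locality is broken as stated: $k$ is only assumed to have characteristic zero, not to be algebraically closed, and the endomorphism ring of an indecomposable injective over a non-closed field is a priori only a local algebra whose residue ring is a division algebra over $k$. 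A tensor product of such algebras need not be local (e.g.\ $\mathbb{H}\otimes_{\mathbb{R}}\mathbb{H}\cong M_4(\mathbb{R})$), so the step ``tensor product of local algebras with residue field $k$ is local'' requires you to actually prove that $\End_{\C}(I_i)/\mathrm{rad}\cong k$. This does hold -- every indecomposable injective $\C$-module is either $M([n])\otimes_{k\Aut([n])}U$ or the dual of such an object over $\Cop$, and $k S_n$ is split in characteristic zero, so $\End_{\C}(I_i)\cong\End_{kS_n}(U)\cong k$ by adjunction -- but your hedge (``because $k$ is algebraically closed \ldots or more carefully, because indecomposable injectives have the lifting property making $\End$ local'') does not establish it: locality of each factor is not enough. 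Supply the splitness argument and the K\"unneth isomorphism and your proof goes through.
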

\begin{proof}
We only show the conclusion for $m=2$ since the general case can be proved similarly. Since $I_1 \boxtimes I_2$ is injective by \cite[Theorem 1.1]{Zeng}, it remains to show that it is indecomposable. By the classification of indecomposable $\C$-modules in \cite{SamSnowden} or \cite{GanLi}, we obtain three cases:
\begin{enumerate}
\item both $I_1$ and $I_2$ are indecomposable projective $\C$-modules;
\item both $I_1$ and $I_2$ are indecomposable finite dimensional injective $\C$-modules;
\item one of $I_1$ and $I_2$ is an indecomposable projective $\C$-module, and the other one is an indecomposable finite dimensional injective $\C$-module.
\end{enumerate}
Furthermore, $I_i$ is an indecomposable finite dimensional injective module if and only if $DI_i$ is an indecomposable projective $\C^{op}$-module, where $D = \Hom_k(-, k)$ is the usual dual functor and $\C^{op}$ is the opposite category of $\C$.

In case (1), the $\C^2$-module $I_1 \boxtimes I_2$ is an indecomposable projective $\C^2$-module, so the conclusion holds. In case (2), $D(I_1)$ and $D(I_2)$ are indecomposable projective $\C^{op}$-modules, so $D(I_1 \boxtimes I_2) \cong D(I_1) \boxtimes D(I_2)$ is an indecomposable projective $(\C^2)^{op}$-module. Consequently, the $\C^2$-module $I_1 \boxtimes I_2$ is also indecomposable. Furthermore, it is easy to see that the endomorphism ring of $I_1 \boxtimes I_2$ is local in both cases by \cite[Lemma 25.4]{Anderson}.

Now we focus on case (3). Without loss of generality we can assume that $I_1$ is an indecomposable projective $\C$-module and $I_2$ is an indecomposable finite dimensional injective $\C$-module. We want to show that the endomorphism ring of $I_1 \boxtimes I_2$ is local. Suppose that $I_1$ is induced from an irreducible left $k\Aut([n])$-module $U$ (that is, $I_1 \cong k\C \otimes_{k\Aut([n])} U$) and the $\Cop$-module $D(I_2)$ is induced from an irreducible right $k\Aut([l])$-module $W$. Since $I_1 \boxtimes I_2 \cong \Fs (U \boxtimes I_2)$ where $\bfs = [n]$, by the universal property of \Sinduced~module, we obtain a ring isomorphism $\End_{\C^2}(I_1 \boxtimes I_2) \cong \End_{\Rs}(U \boxtimes I_2)$. By Lemma \ref{lemma - Ind_preserves_injectives}, the $\Rs$-module $U \boxtimes I_2$ is injective. Moreover, it is indecomposable since the dual $\Rs^{op}$-module $D(U \boxtimes I_2) \cong D(U) \boxtimes D(I_2)$ is induced from the irreducible right $k(\Aut([n]) \times \Aut([l]))$-module $D(U) \boxtimes W$. Therefore by \cite[Lemma 25.4]{Anderson}, the endomorphism ring $\End_{\Rs}(U \boxtimes I_2)$ is local as desired.
\end{proof}

As a corollary, we have:

\begin{corollary} \label{corollary - ind_direc_summand_of_sum_of_exter_prod_of_ind_injs}
Let $E_i = I^i_1 \boxtimes \ldots \boxtimes I^i_m$ be an $\Cm$-module for $i \in [n]$, where each $I^i_j$ is an indecomposable injective $\C$-module for $j \in [m]$. Then any indecomposable direct summand of the $\Cm$-module $\oplus_{i=1}^n E_i$ is isomorphic to certain $E_i$.
\end{corollary}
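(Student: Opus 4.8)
The plan is to invoke a Krull--Schmidt type uniqueness argument, which applies because each summand $E_i$ has a local endomorphism ring by Lemma \ref{lemma - externel_tensor_of_indinjs_is_indinj_having_local_end_ring}. First I would observe that $E := \bigoplus_{i=1}^n E_i$ is a finitely generated injective $\Cm$-module (being a finite direct sum of such, by \cite[Theorem 1.1]{Zeng} applied to each $E_i$), and that by the cited lemma each $E_i$ is indecomposable with $\End_{\Cm}(E_i)$ local. Thus $E$ is a finite direct sum of indecomposable modules each having local endomorphism ring. The Krull--Schmidt--Azumaya theorem (see e.g. \cite[Theorem 12.6]{Anderson}) then asserts that any two decompositions of $E$ into indecomposables with local endomorphism rings are equivalent up to permutation and isomorphism; in particular, any indecomposable direct summand $D$ of $E$ appears in the list $E_1, \ldots, E_n$ up to isomorphism, i.e. $D \cong E_i$ for some $i$.

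In slightly more detail, I would argue as follows. Let $D$ be an indecomposable direct summand of $E$, so $E \cong D \oplus D'$ for some $\Cm$-module $D'$. Since $\Cm$ is locally Noetherian over $k$ (Lemma \ref{lemma - locally_Noetherian_property_of_CmG} with $G$ trivial, or the classical Noetherianity result), $D$ and $D'$ are finitely generated, and being summands of an injective module they are injective. By Theorem \ref{theorem - finitely_generated_module_is_finitely_cogenerated_by_the_class_U^m_G}, applied again, each of $D$, $D'$ is a direct summand of a finite direct sum of modules from $\U^m_1$; but more simply, it suffices here to note that $D'$, being finitely generated over a locally Noetherian category, decomposes as a finite direct sum of indecomposables, and each such indecomposable summand, being a summand of $E$, itself has local endomorphism ring by the general fact that a direct summand of a module with semiperfect endomorphism ring again has local endomorphism ring when indecomposable --- alternatively one simply applies Krull--Schmidt directly. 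Then $D \oplus D'$ gives a second decomposition of $E$ into indecomposables with local endomorphism rings, and uniqueness forces $D \cong E_i$ for some $i \in [n]$.

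The main obstacle, and the only genuine subtlety, is ensuring that the Krull--Schmidt--Azumaya machinery is applicable: one needs every indecomposable module appearing in \emph{any} decomposition of $E$ to have a local endomorphism ring, not just the given summands $E_i$. This is handled by the standard observation that if $M$ is a module with $\End(M)$ semiperfect (which holds here since $\End_{\Cm}(E) = \End_{\Cm}(\bigoplus E_i)$ has the $E_i$ contributing local corner rings, making it semiperfect), then every direct summand of $M$ is again a finite direct sum of modules with local endomorphism rings, and idempotents lift; thus $D$ and the indecomposable summands of $D'$ automatically have local endomorphism rings. I expect this to be a short verification citing \cite{Anderson} rather than a real difficulty. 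An alternative, more self-contained route avoiding the general theory would be to use the injectivity and the cogeneration theorem to embed $D$ into a finite sum of modules $M(\bfs) \boxtimes E''$ from $\U^m_1$, then chase a nonzero composite $E_i \to D \to E_j$ for suitable indices using the local endomorphism rings to show it must be an isomorphism; but the Krull--Schmidt approach is cleaner and I would present that.
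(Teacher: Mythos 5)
Your proposal is correct and is essentially the paper's proof: the paper simply cites Lemma \ref{lemma - externel_tensor_of_indinjs_is_indinj_having_local_end_ring} together with the Krull--Schmidt--Azumaya theorem \cite[Theorem 12.6(2)]{Anderson}. The subtlety you worry about (summands in other decompositions having local endomorphism rings) is already absorbed into the statement of that theorem, which directly asserts that any indecomposable direct summand of a finite direct sum of modules with local endomorphism rings is isomorphic to one of the given summands.
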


\begin{proof}
    This follows from Lemma \ref{lemma - externel_tensor_of_indinjs_is_indinj_having_local_end_ring} and \cite[Theorem 12.6(2)]{Anderson}.
\end{proof}

Now we are ready to prove the second theorem in the Introduction of this paper.

\begin{proof}[A proof of Theorem \ref{theorem - classification_of_f.g._injective_FIm-module}]
    By Theorem \ref{theorem - finitely_generated_module_is_finitely_cogenerated_by_the_class_U^m} and Corollary \ref{corollary - module_in_U^S_is_injective}, any indecomposable injective $\Cm$-module $I$ is isomorphic to an indecomposable direct summand of a finite direct sum of modules in $\U^m_1$. Since the external tensor product $\boxtimes$ commutes with the direct sum $\oplus$, such a finite direct sum can be written as $\oplus_{i=1}^n E_i$ where $E_i$ is described in Corollary \ref{corollary - ind_direc_summand_of_sum_of_exter_prod_of_ind_injs}. Thus, its indecomposable direct summand is isomorphic to a certain $E_i$, as desired.
\end{proof}

\end{document}